\def\cal{\mathcal}
\def\Bbb{\mathbb}
\newenvironment{NB}{
\color{red}{\bf NB}. \footnotesize 
}{}
\newenvironment{NB2}{
\color{blue}{\bf NB}. \footnotesize
}{}
\newcommand{\bb} {\mathbb}
\newcommand{\bl} {\mathbf}
\newcommand{\simto}{\xrightarrow{\ \sim\ }}
\newcommand{\Eq}  {\operatorname{Eq}}
\newcommand{\FM}  {\operatorname{FM}}
\newcommand{\LieO}{\operatorname{O}}
\newcommand{\SL}  {\operatorname{SL}}
\newcommand{\Pic}{\operatorname{Pic}}
\newcommand{\ch}{\operatorname{ch}}
\newcommand{\Coh}{\operatorname{Coh}}
\newcommand{\Ext}{\operatorname{Ext}}
\newcommand{\Hom}{\operatorname{Hom}}
\newcommand{\Aut}{\operatorname{Aut}}
\newcommand{\rk}{\operatorname{rk}}
\newcommand{\NS}{\operatorname{NS}}
\newcommand{\Alb}{\operatorname{Alb}}
\newcommand{\Amp}{\operatorname{Amp}}
\newcommand{\tr}{\operatorname{tr}}
\newcommand{\alg}{\operatorname{alg}}
\newcommand{\End}{\operatorname{End}}
\newcommand{\Sym}{\operatorname{Sym}}
\font\b=cmr10 scaled \magstep5
\def\bigzerou{\smash{\lower1.7ex\hbox{\b 0}}}
\numberwithin{equation}{section}
\theoremstyle{plain}
 \newtheorem{thm}{Theorem}[section]
 \newtheorem{lem}[thm]{Lemma}
 \newtheorem{prop}[thm]{Proposition}
 \newtheorem{cor}[thm]{Corollary}
\theoremstyle{definition}
 \newtheorem{defn}[thm]{Definition}
\theoremstyle{remark}
 \newtheorem{rem}[thm]{Remark}
\begin{document}

\title{Categorical entropy for Fourier-Mukai transforms
on generic abelian surfaces.}
\author{K\={o}ta Yoshioka}
\address{Department of Mathematics, Faculty of Science,
Kobe University, Kobe, 657, Japan}
\email{yoshioka@math.kobe-u.ac.jp}

\thanks{
The author is supported by the Grant-in-aid for 
Scientific Research (No.\ 26287007,\ 24224001), JSPS}
\keywords{Categorical entropy, abelian surfaces}
\subjclass[2010]{Primary 14D20}

\begin{abstract}
In this note, we shall compute the categorical entropy of 
an autoequivalence on a generic abelian surface.
\end{abstract}

\maketitle

\section{Introduction}
In \cite{DHKK}, Dimitrov, Haiden, Katzarkov, and Kontsevich introduced
a categorical entropy $h_t(\Phi)$ $(t \in {\Bbb R})$ 
for an endofunctor $\Phi$ of a triangulated category
with a split generator. For endofunctors of the derived category ${\bf D}(X)$
of coherent sheaves
on a smooth projective variety $X$,
Kikuta and Takahashi \cite{Kikuta}, \cite{KT} studied the entropy.
In particular they proved that $h_0$ coincides with the topological entropy
for 
an endofunctor ${\bf L}f^*$ induced by a surjective endomorphism 
$f$ of smooth projective variety \cite[Thm. 5.4]{KT} 
and an autoequivalence of ${\bf D}(X)$ 
if $\dim X=1$ \cite{Kikuta} or $\pm K_X$ is ample \cite[Thm. 5.6]{KT}.
They also conjectured that a Gromov-Yomdin type result holds, that is,
$h_0(\Phi)=\log \rho(\Phi)$ (\cite[Conjecture 5.3]{KT}) where
$\rho(\Phi)$ is the spectral radius of the action of $\Phi$ on the algebraic 
cohomology group $H^*(X,{\Bbb Q})_{\alg}$. 
It seems that there are only a few example of  computation of 
categorical entropy, and it may be interesting to add more examples. 
In this note, we shall give an almost trivial
 example of the computation.
Thus we shall compute the entropy
for special autoequivalences on abelian surfaces.
For an abelian variety, Orlov \cite{Or1} 
proved that the kernel of an equivalence
is a sheaf up to shift. So we can expect that the entropy
which measures the complexity of an equivalence
is simple. For a special equivalence on
an abelian surface, we shall check that 
our expectation is true.

To be more precise,
let $X$ be an abelian surface and $H$ an ample divisor.
We set $L:={\Bbb Z} \oplus {\Bbb Z}H \oplus {\Bbb Z}\varrho_X$,
where $\varrho_X$ is the fundamental class of $X$.
Let $\Phi:{\bf D}(X) \to {\bf D}(X)$ be a Fourier-Mukai functor
which preserves $L$.
In Theorem \ref{thm:h_t},
we shall compute $h_t(\Phi)$.
Combining a recent paper of Ikeda \cite{Ikeda},
we also check that
the conjecture of Kikuta and Takahashi holds for equivalences
on abelian surfaces (Proposition \ref{prop:KT}).

We also remark that there is a symplectic manifold of 
generalized Kummer type which has an automorphism of infinite order.
This is an analogue of a recent result of Ouchi \cite{Ouchi}.
By the absence of spherical objects,
our example is almost trivial.

\section{Fourier-Mukai transforms on an abelian surface}
\subsection{Notation.}
We denote the category of coherent sheaves on $X$ by $\Coh(X)$ and
the bounded derived category of $\Coh(X)$ by ${\bf D}(X)$.
A Mukai lattice of $X$ consists of 
$H^{2*}(X,{\Bbb Z}):=\bigoplus_{i=0}^2 H^{2i}(X,{\Bbb Z})$ 
and an integral bilinear form
$\langle\;\;,\;\; \rangle$ on 
$H^{2*}(X,{\Bbb Z})$:
$$
\langle x_0+x_1+x_2 \varrho_X,y_0+y_1+y_2 \varrho_X \rangle:=
(x_1,y_1)-x_0 y_2-x_2 y_0  \in {\Bbb Z},
$$
where $x_1,y_1 \in H^{2}(X,{\Bbb Z})$, 
$x_0,x_2,y_0,y_2 \in {\Bbb Z}$ and $\varrho_X \in H^4(X,{\Bbb Z})$ 
is the fundamental class of $X$.
We also introduce the algebraic Mukai lattice as
the pair of $H^*(X,{\Bbb Z})_{\alg}:=
{\Bbb Z} \oplus \NS(X) \oplus {\Bbb Z}$
and $\langle\;\;,\;\; \rangle$ on 
$H^*(X,{\Bbb Z})_{\alg}$.
For $x=x_0+x_1+x_2 \varrho_X$ with $x_0,x_2 \in {\Bbb Z}$
and $x_1 \in H^2(X,{\Bbb Z})$,
we also write $x=(x_0,x_1,x_2)$. 
For $E \in {\bf D}(X)$,
$v(E):=\ch(E)$ denotes the Mukai vector of $E$.

For ${\bf E} \in {\bf D}(X \times Y)$,
we set 
$$
\Phi_{X \to Y}^{{\bf E}}(x):={\bf R} p_{Y*}({\bf E} \otimes p_X^*(x)),\;
x \in {\bf D}(X),
$$
where $p_X,p_Y$ are projections from $X \times Y$ to
$X$ and $Y$ respectively. 
%Two smooth projective varieties $Y_1$ and $Y_2$ are said to be 
%Fourier-Mukai partners if there is an equivalence 
%$\bl{D}(Y_1)\simeq\bl{D}(Y_2)$.
%We denote by $\FM(X)$ the set of Fourier-Mukai partners of $X$. 
Let $\Eq(\bl{D}(X),\bl{D}(Y))$
be the set of equivalences between $\bl{D}(X)$ and $\bl{D}(Y)$.
%The set of equivalences between $\bl{D}(X)$ and $\bl{D}(Y)$ 
%is denoted by $\Eq(\bl{D}(X),\bl{D}(Y))$. 
%For $Y,Z\in\FM(X)$, we set 
We set 
\begin{align*}
&\Eq_0(\bl{D}(Y),\bl{D}(Z))\\
:=&
 \left\{ \left. \Phi_{Y \to Z}^{\bl{E}[2k]} 
   \in \Eq(\bl{D}(Y),\bl{D}(Z))
   \right| \bl{E} \in \Coh(Y \times Z),\, k \in {\bb{Z}} 
 \right\},
\\
& \cal{E}(Z)
:=
% \bigcup_{Y\in\FM(Z)}\Eq_0(\bl{D}(Y),\bl{D}(Z)),
\bigcup_{Y}\Eq_0(\bl{D}(Y),\bl{D}(Z)),\\
& \cal{E}
:=
 \bigcup_{Z}\cal{E}(Z)
=\bigcup_{Y,Z }\Eq_0(\bl{D}(Y),\bl{D}(Z)).
\end{align*}
Note that $\cal{E}$ is a groupoid with respect to the composition 
of the equivalences.

For an object $E \in {\bf D}(X)$ with $\rk E \ne 0$,
 we set $\mu(E):=c_1(E)/\rk E$.

\subsection{Semi-homogeneous sheaves}

We collect some properties of semi-homogeneous sheaves on an abelian surface
\cite{Mu0}.

\begin{prop}\label{prop:stable}
For a coherent sheaf $E$ on $X$, the following conditions are equivalent.  
\begin{enumerate}
\item
$E$ is a semi-homogeneous sheaf.
\item
$E$ is a semi-stable sheaf with $\langle v(E)^2 \rangle=0$ with respect to 
an ample divisor $H$.
\end{enumerate}
\end{prop}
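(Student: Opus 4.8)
The plan is to establish both implications by reducing to the structure theory of semi-homogeneous sheaves. Recall that a semi-homogeneous sheaf $E$ on an abelian surface $X$ is, by Mukai's definition, a sheaf such that for every $x$ in $X$ the translate $T_x^*E$ and $E\otimes P$ (for suitable $P$ in $\Pic^0(X)$) are isomorphic; equivalently, the set of line bundles $P$ with $T_x^*E\cong E\otimes P$ has the expected maximal dimension.

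For the implication (i) $\Rightarrow$ (ii): I would invoke Mukai's structure results \cite{Mu0}. A semi-homogeneous sheaf is Gieseker semi-stable with respect to any polarization $H$ — this follows because its Jordan--Hölder factors are again semi-homogeneous of the same slope $\mu(E)$, and stable semi-homogeneous sheaves are simple with endomorphism ring a field. For the discriminant condition, one uses that a stable semi-homogeneous sheaf $E$ satisfies $\Ext^1(E,E)$ of dimension $2$, hence by the Mukai pairing formula $\langle v(E)^2\rangle = \dim\Ext^1(E,E)-\dim\Hom(E,E)-\dim\Ext^2(E,E) = 2-1-1 = 0$; for the general semi-homogeneous $E$, being an iterated extension of stable semi-homogeneous sheaves of the same slope (and the same "reduced" Mukai vector up to rank scaling), one has $v(E)$ proportional to an isotropic vector, so $\langle v(E)^2\rangle = 0$ as well.

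For the converse (ii) $\Rightarrow$ (i): let $E$ be $H$-semi-stable with $\langle v(E)^2\rangle=0$. Consider the Jordan--Hölder filtration with stable quotients $E_i$ of the same reduced Hilbert polynomial. For each stable factor, semi-stability plus Bogomolov's inequality gives $\langle v(E_i)^2\rangle\geq 0$; summing up and using additivity of $v$ together with the fact that all $v(E_i)$ lie on the same ray in the slope direction, the vanishing $\langle v(E)^2\rangle=0$ forces each $\langle v(E_i)^2\rangle=0$. Then a stable sheaf $E_i$ with $\langle v(E_i)^2\rangle=0$ has $\dim\Ext^1(E_i,E_i)=\dim\Hom + \dim\Ext^2 + \langle v(E_i)^2\rangle = 1+1+0 = 2$, which is exactly the dimension of the tangent space to $\Pic^0(X)$, so the moduli space containing $E_i$ is $2$-dimensional and translation plus twisting by $\Pic^0$ acts with open orbit; this is Mukai's criterion identifying $E_i$ as semi-homogeneous. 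Finally one checks that an iterated self-extension of semi-homogeneous sheaves with the same slope (which is what $E$ is) remains semi-homogeneous — again a result in \cite{Mu0}.

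The main obstacle I anticipate is the bookkeeping in the converse direction: one must be careful that the Jordan--Hölder factors, while sharing the same slope, need not a priori share the same "primitive" Mukai vector, so the argument that $\langle v(E)^2\rangle=0$ propagates to each factor requires the precise form of the Hodge-index / Bogomolov estimate on the isotropic cone (the quadratic form $\langle\ ,\ \rangle$ restricted to the relevant rank-slope plane is negative semi-definite, so a sum of vectors each of non-negative square landing on square zero must have each summand of square zero and all proportional). Once that linear-algebra point is pinned down, everything else is a direct appeal to \cite{Mu0}.
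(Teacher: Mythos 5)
The first thing to say is that the paper does not prove this proposition at all: it is introduced with ``we collect some properties of semi-homogeneous sheaves\dots \cite{Mu0}'' and is simply quoted from Mukai (via \cite{YY1}), so any complete argument is more than the text supplies. Your reconstruction follows the standard route and is sound for sheaves of positive rank: Mukai's structure theorem (a semi-homogeneous bundle of slope $\delta$ is a successive extension of simple semi-homogeneous bundles of slope $\delta$, each Gieseker-stable with $\ch=r e^{\delta}$, hence with proportional isotropic Mukai vectors) gives (i)$\Rightarrow$(ii), and your Jordan--H\"{o}lder argument gives the converse. On the ``bookkeeping'' you flag as the main obstacle: there is a cleaner categorical resolution that avoids the Hodge-index analysis on the rank--slope plane entirely. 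For distinct stable factors $E_i\not\cong E_j$ with the same reduced Hilbert polynomial, stability and Serre duality kill $\Hom(E_i,E_j)$ and $\Ext^2(E_i,E_j)$, so $\langle v(E_i),v(E_j)\rangle=\dim\Ext^1(E_i,E_j)\geq 0$, while $\langle v(E_i)^2\rangle=\dim\Ext^1(E_i,E_i)-2\geq 0$ because every simple sheaf on an abelian surface has $\dim\Ext^1\geq 2$; then $0=\langle v(E)^2\rangle=\sum_{i,j}\langle v(E_i),v(E_j)\rangle$ forces every term to vanish, and vanishing products of isotropic classes in a hyperbolic lattice give proportionality. This also dispenses with Bogomolov's inequality.

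The one genuine gap is that your argument covers only positive rank, whereas the statement is about arbitrary coherent sheaves and the paper genuinely needs the torsion case: Proposition \ref{prop:semi-homog} (2),(3) concerns torsion semi-homogeneous sheaves (degree-zero line bundles on elliptic curves in $X$, zero-dimensional sheaves with $v\in{\Bbb Z}\varrho_X$), and these must be captured by the equivalence. The definition you recall --- $T_x^*E\cong E\otimes P$ for every $x$ --- is Mukai's definition for vector bundles and fails for a skyscraper $k_x$; the correct notion for sheaves is that $\Sigma(E):=\{(x,P)\in X\times\Pic^0(X)\mid T_x^*(E)\otimes P\cong E\}$ has dimension $2$. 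Likewise the structure theorem and the stability of simple semi-homogeneous objects are proved in \cite{Mu0} for bundles only. The standard repair (and the one the author has in mind) is to reduce to the locally free case by a Fourier--Mukai transform: twist by a large power of $H$ and apply $\Phi^{\bf P}_{X\to\Pic^0(X)}$, which turns the torsion sheaf into a vector bundle, preserves (semi-)stability for a sufficiently large twist, is an isometry of Mukai lattices, and intertwines the $(x,P)$-action with the dual action, so both conditions (i) and (ii) are transported. With that reduction added, your proof is complete.
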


\begin{NB}
Let $E$ be a 1-dimensional semi-homogeneous sheaf.
Then $\Phi_{X \to \Pic^0(X)}^{{\bf P}}(E \otimes H^{\otimes n})$
is a semi-homogeneous vector bundle on $\Pic^0(X)$.
Hence $E$ is a semi-stable sheaf with 
$\langle v(E)^2 \rangle=0$.
(If $n$ is sufficiently large, then stability is preserved.
So $E$ must be semi-stable.)

Then $(c_1(E)^2)=0$. Indeed if $(c_1(E)^2)>0$, then
$x \mapsto T_x^*(\det E) \otimes (\det E)^{-1}$
gives a surjective map $X \to \Pic^0(X)$.

\end{NB}

\begin{prop}[{cf. \cite{Or1}}]\label{prop:WIT}
Let $\Phi:{\bf D}(X) \to {\bf D}(Y)$ be an equivalence.
For a semi-homogeneous sheaf $E$ on $X$,
there is an integer $n$ such that $\Phi(E)[n]$ is a semi-homogeneous sheaf. 
\end{prop}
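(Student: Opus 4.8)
The plan is to combine the Mukai-theoretic characterization of semi-homogeneous sheaves from Proposition \ref{prop:stable} with the fact that a Fourier-Mukai equivalence is an isometry of the Mukai lattices. First I would recall that $\Phi$ induces an isometry $\varphi := \varphi_\Phi$ of $(H^{2*}(X,{\Bbb Z}),\langle\;,\;\rangle)$ onto $(H^{2*}(Y,{\Bbb Z}),\langle\;,\;\rangle)$, so that $v(\Phi(E)) = \varphi(v(E))$ up to sign and, in particular, $\langle v(\Phi(E))^2\rangle = \langle v(E)^2\rangle = 0$ whenever $E$ is semi-homogeneous. Thus the Mukai vector of $\Phi(E)$ is isotropic; the only thing left to arrange is that $\Phi(E)$, after an appropriate shift, is an honest sheaf, and then semi-stability will be automatic from Proposition \ref{prop:stable}(ii).

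The key step is therefore the following: for a semi-homogeneous sheaf $E$, there is $n \in {\Bbb Z}$ with $\mathcal{H}^i(\Phi(E)[n]) = 0$ for $i \neq 0$, i.e. $E$ satisfies a $\mathrm{WIT}$-type condition with respect to $\Phi$. Here I would invoke the structure theory of semi-homogeneous sheaves: any semi-homogeneous sheaf is, up to twist by a line bundle and push-forward along an isogeny, a semi-homogeneous sheaf whose Mukai vector is primitive, and such sheaves sit in the interior of a single chamber so that $\Phi$ maps them to (shifts of) Gieseker-semi-stable sheaves; alternatively, one can argue directly that since $\Phi$ is an equivalence its kernel is a sheaf up to shift by Orlov's theorem cited in the introduction, and then use that $E$ is a successive extension of stable semi-homogeneous sheaves of the same slope — or better, that $E$ becomes a semi-homogeneous \emph{vector bundle} after a Fourier-Mukai transform, reducing to the locally free case. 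For a semi-homogeneous vector bundle $V$ one has $V \otimes V^\vee$ globally generated-like behaviour and the transform of $V$ is computed fibrewise, giving the vanishing of all but one cohomology sheaf.

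The main obstacle is precisely this cohomological concentration: showing that no semi-homogeneous sheaf gets spread across two cohomological degrees by $\Phi$. I expect to handle it by reducing, via tensoring with a suitable line bundle and applying an isogeny, to the case where $v(E)$ is primitive and isotropic, observing that then $\Phi(E)$ has primitive isotropic Mukai vector $\pm\varphi(v(E))$, which (after possibly composing with a shift) is the Mukai vector of a $\mu$-stable semi-homogeneous sheaf; since $E$ is $\Phi$-$\mathrm{WIT}$ when $\Phi(E)$'s would-be cohomology sheaves have Mukai vectors that cannot add up correctly, the two-term complex is forced to be concentrated. Once $\Phi(E)[n]$ is known to be a sheaf $F$ with $\langle v(F)^2\rangle = 0$, Proposition \ref{prop:stable} immediately gives that $F$ is semi-homogeneous (it is automatically semi-stable since its Mukai vector is isotropic and any destabilizing subsheaf would violate the Hodge-index-type inequality), completing the proof. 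A remark worth inserting: the integer $n$ depends only on the connected component of the moduli space containing $E$, not on $E$ itself, which is what makes the later entropy computation tractable.
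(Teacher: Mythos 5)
The paper offers no proof of this proposition at all: it is imported from Orlov \cite{Or1} together with Mukai's theory of semi-homogeneous sheaves, so there is no internal argument to compare yours against. Judged on its own terms, your sketch contains two genuine gaps. The first is your closing step: a sheaf $F$ with $\langle v(F)^2\rangle=0$ is \emph{not} automatically semi-stable, so Proposition \ref{prop:stable} cannot be invoked once you only know that $\Phi(E)[n]$ is a sheaf with isotropic Mukai vector. For instance $I_Z\oplus {\cal O}_Z$, with $Z\subset X$ a nonempty $0$-dimensional subscheme, has Mukai vector $(1,0,0)$, which is isotropic, but it is not pure, hence not semi-stable and not semi-homogeneous. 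Isotropy is only one of the two conditions in Proposition \ref{prop:stable}(ii); semi-stability of the transform must be established separately, e.g.\ by reducing to $E$ stable (so $\Phi(E)$ is simple) and using Mukai's original characterization via the stabilizer subgroup rather than the numerical one.

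The second gap is the concentration step, which you correctly identify as the crux but do not actually carry out. The heuristic that the cohomology sheaves of $\Phi(E)$ ``cannot add up correctly'' has no force: for any two-term complex one has $v({\cal H}^0)-v({\cal H}^1)=v(\Phi(E))$, and there is no numerical obstruction to splitting an isotropic class this way. What is actually needed is either (a) Orlov's structure theorem, writing $\Phi$ as a composition of shifts, automorphisms, twists by line bundles and classical transforms $\Phi^{\bf P}$, each of which visibly sends semi-homogeneous sheaves to shifts of semi-homogeneous sheaves (for $\Phi^{\bf P}$ this is Mukai's WIT statement), or (b) the invariance argument: for $E$ stable semi-homogeneous the stabilizer $\Sigma(E)\subset X\times \Pic^0(X)$ is $2$-dimensional, $\Phi$ transports it to a $2$-dimensional subgroup stabilizing $\Phi(E)$, and the fact that $\Phi(E)$ is simple with $\sum_k \dim\Ext^k(\Phi(E),\Phi(E))=4$ forces its cohomology into a single degree. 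Either route also repairs the semi-stability issue above. Your final remark, that $n$ depends only on the component of the moduli space containing $E$, is true and is indeed what the entropy computation in Section \ref{sect:computation} exploits, but it likewise requires the constancy of the WIT index in families, not just the lattice-level computation.
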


\begin{prop}[{\cite[Prop. 4]{Fourier}}]\label{prop:semi-homog}
Let $E$ and $F$ be semi-homogeneous sheaves.
\begin{enumerate}
\item
Assume that $E$ and $F$ are locally free sheaves.
\begin{enumerate}
\item
If $\langle v(E),v(F) \rangle>0$, then
$\Hom(E,F)=\Ext^2(E,F)=0$.
\item
If $\langle v(E),v(F) \rangle<0$, then
$(\mu(E),H) \ne (\mu(F),H)$, $\Ext^1(E,F)=0$ and
\begin{equation}
\begin{cases}
\Hom(E,F)=0, & (\mu(E),H)>(\mu(F),H)\\
\Ext^2(E,F)=0, & (\mu(F),H)>(\mu(E),H).
\end{cases}
\end{equation}
\end{enumerate}
\item
Assume that $E$ is locally free and $F$ is a torsion sheaf.
\begin{enumerate}
\item
If $\langle v(E),v(F) \rangle>0$, then
$\Hom(E,F)=\Ext^2(E,F)=0$.
\item
If $\langle v(E),v(F) \rangle<0$, then
$\Ext^1(E,F)=\Ext^2(E,F)=0$.
\end{enumerate}
\item
Assume that $E$ and $F$ are torsion sheaves.
Then $\langle v(E),v(F) \rangle \geq 0$.
If $\langle v(E),v(F) \rangle>0$, then 
$\Hom(E,F)=\Ext^2(E,F)=0$.
\end{enumerate}
\end{prop}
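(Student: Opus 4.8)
The plan is to reduce every assertion to the vanishing of a sheaf-cohomology group of the form $H^i(X,\mathcal{H}om(E,F))$ (or $H^i(X,\mathcal{H}om(F,E))$), and to read that vanishing off from Mukai's index theorem for semi-homogeneous sheaves together with the Hodge index theorem on $X$. Two pieces of bookkeeping come first. Since $\omega_X\cong\mathcal O_X$, Serre duality gives $\Ext^i(E,F)\cong\Ext^{2-i}(F,E)^\vee$; because the sign of $\langle v(E),v(F)\rangle$ and the slope conditions in (1)(b) are symmetric in $E$ and $F$, every $\Ext^2$-statement is a $\Hom$-statement for the pair $(F,E)$, so it suffices to treat $\Hom$ and $\Ext^1$. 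Since $\td(X)$ is trivial, $\chi(E,F)=-\langle v(E),v(F)\rangle$; hence once the relevant $\Hom$ and $\Ext^2$ are known to vanish, $\dim\Ext^1(E,F)$ is determined by $\chi$, so each claimed $\Ext^1$-vanishing is exactly the statement that $\chi(E,F)$ is concentrated in one degree. One vanishing is free: in part (2), $\Ext^2(E,F)\cong\Hom(F,E)^\vee=0$ because a torsion sheaf admits no nonzero map to a locally free sheaf.

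For part (1), with $E$ and $F$ locally free, the point is that $\mathcal{H}om(E,F)=E^\vee\otimes F$ is again semi-homogeneous — the class of semi-homogeneous sheaves is stable under duals and tensor products — so $\Ext^i(E,F)=H^i(X,E^\vee\otimes F)$. By Mukai, a semi-homogeneous sheaf $G$ satisfies the index theorem $\IT$ with respect to the Poincar\'e bundle, and when $(c_1(G)^2)\ne 0$ its index is $0$, $1$ or $2$ according as $\mu(G)=c_1(G)/\rk(G)$ is ample, of negative self-intersection, or anti-ample; in particular $H^i(X,G)=0$ for all $i$ different from that index. Put $\delta:=\mu(E)-\mu(F)$; using $\langle v(E)^2\rangle=\langle v(F)^2\rangle=0$ one computes
$$
\langle v(E),v(F)\rangle=-\tfrac12\,\rk(E)\,\rk(F)\,(\delta^2),
$$
and $\mu(E^\vee\otimes F)=-\delta$. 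So if $\langle v(E),v(F)\rangle>0$, then $(\delta^2)<0$, the index of $E^\vee\otimes F$ is $1$, and $\Hom(E,F)=\Ext^2(E,F)=0$; this is (1)(a). If $\langle v(E),v(F)\rangle<0$, then $(\delta^2)>0$, so the Hodge index theorem forces $(\delta,H)\ne 0$, i.e.\ $(\mu(E),H)\ne(\mu(F),H)$, and — since an abelian surface has no irreducible curve of negative self-intersection, so its ample cone is the positive cone — one of $\pm\delta$ is ample. When $(\mu(E),H)>(\mu(F),H)$, $\mu(E^\vee\otimes F)=-\delta$ is anti-ample, the index is $2$, and $\Hom(E,F)=\Ext^1(E,F)=0$; when $(\mu(F),H)>(\mu(E),H)$, $-\delta$ is ample, the index is $0$, and $\Ext^1(E,F)=\Ext^2(E,F)=0$. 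This gives (1)(b).

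For part (2), one still has $\mathcal{H}om(E,F)=E^\vee\otimes F$ ($E$ locally free), but now this is a semi-homogeneous \emph{torsion} sheaf $T$, pure of dimension one, with $(c_1(T)^2)=\langle v(T)^2\rangle=0$ and $\chi(T)=\chi(E,F)=-\langle v(E),v(F)\rangle$. For such a $T$ one has $H^2(X,T)=0$ always, $H^0(X,T)=0$ if $\chi(T)<0$, and $H^1(X,T)=0$ if $\chi(T)>0$: because $(c_1(T)^2)=0$, the reduced support of $T$ is a disjoint union of elliptic curves $C$ with $(C^2)=0$, on which the sheaves $\mathcal O_{kC}$ are semistable of slope zero, so a nonzero global section of $T$ — respectively, by Serre duality and the same argument applied to the (again pure, semistable, $1$-dimensional) dual sheaf of $T$, a nonzero class in $H^1$ — would produce a subsheaf of slope $\ge 0$ inside a semistable sheaf of negative slope, which is impossible. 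Together with the free $\Ext^2$-vanishing, this is (2). For part (3), with $E$ and $F$ torsion, $\langle v(E),v(F)\rangle=(c_1(E),c_1(F))$ is the intersection number of two effective divisors of self-intersection zero, hence of two nef classes, hence $\ge 0$; if it is $>0$ the classes are not proportional, so the supports are contained in fibres of two distinct elliptic fibrations of $X$ and meet in a finite set, and then any homomorphism $E\to F$ has $0$-dimensional image, which vanishes by purity of $F$, giving $\Hom(E,F)=0$ and likewise $\Ext^2(E,F)\cong\Hom(F,E)^\vee=0$. (Alternatively, parts (2) and (3) reduce to part (1) by a Fourier--Mukai equivalence, which by Proposition \ref{prop:WIT} preserves semi-homogeneity and which is an isometry of Mukai lattices inducing isomorphisms on all $\Ext$-groups, carrying the torsion sheaves to locally free ones up to shift — at the cost of keeping track of those shifts.)

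The step I expect to be the main obstacle is to have Mukai's index theorem available in exactly the form used above — in particular that the index of a semi-homogeneous sheaf depends only on the signs of $(c_1^2)$ and of $(c_1,H)$ — and, for the torsion cases, to pin down the geometry of effective divisors of self-intersection zero on an abelian surface. Both ultimately use the same special feature, the absence of $(-2)$-classes and of spherical objects, which is also what makes the eventual computation of $h_t(\Phi)$ so simple.
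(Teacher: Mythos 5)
The paper gives no proof of this proposition: it is imported verbatim from \cite[Prop.~4]{Fourier}, so there is no internal argument to measure yours against. Your reconstruction is correct and is, in substance, the standard one. The engine of part (1) --- the identity $\langle v(E),v(F)\rangle=-\tfrac12\rk E\,\rk F\,((\mu(E)-\mu(F))^2)$ together with the fact that on an abelian surface a class of positive square and positive $H$-degree is ample --- is exactly what the paper records in the Remark immediately after the statement, and feeding $\mu(E^\vee\otimes F)=\mu(F)-\mu(E)$ into Mukai's index theorem then yields all of (1), including the $(\mu(E),H)\ne(\mu(F),H)$ claim via Hodge index. Two points deserve to be made explicit. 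First, the form of the index theorem you use is proved by Mukai for \emph{simple} semi-homogeneous bundles (via $G\cong\pi_*(L)$ for an isogeny $\pi$ and a nondegenerate line bundle $L$, whose index on a surface is $0$, $1$ or $2$ according to the signs of $(c_1(L)^2)$ and $(c_1(L),H)$); for the possibly non-simple bundle $E^\vee\otimes F$ you should first invoke Proposition~\ref{prop:stable} and pass to a Jordan--H\"{o}lder filtration, whose graded pieces are stable semi-homogeneous of the same slope, hence of the same index, and conclude by the long exact sequence. Second, in part (2) the slope comparison requires knowing that $T=E^\vee\otimes F$ is semistable and pure of dimension one; this again follows from $T$ being semi-homogeneous plus Proposition~\ref{prop:stable}, and the subsheaf generated by a section is $\mathcal{O}_W$ with $\chi(\mathcal{O}_W)=-\tfrac12(W^2)=0$ since every component of $\Supp T$ is an elliptic curve of square zero. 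With these two points spelled out the argument is complete; the Serre-duality bookkeeping and part (3) are exactly as you say, and nothing in your route conflicts with how the proposition is used later in the paper.
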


\begin{rem}
If $(D^2)>0$, then $D$ is ample if and only if $(D,H_0)>0$ for an ample
divisor $H_0$.
Since $-\langle v(E),v(F) \rangle=\rk E \rk F ( (\mu(E)-\mu(F))^2)/2$,
$\langle v(E),v(F) \rangle<0$ implies $\mu(E)-\mu(F)$ is ample or
$\mu(F)-\mu(E)$ is ample.
\end{rem}

\subsection{Cohomological Fourier-Mukai transforms}\label{subsect:FM}
We collect some results on the Fourier-Mukai
transforms on abelian surfaces $X$ with $\rk \NS(X)=1$.
Let $H_X$ be the ample generator of $\NS(X)$.
We shall describe the action of Fourier-Mukai transforms 
on the cohomology lattices in \cite{YY1}.
For $Y \in \FM(X)$, we have $(H_Y^2)=(H_X^2)$.
We set $D:= (H^2_X)/2$. 
In \cite[sect. 6.4]{YY1}, we constructed an isomorphism of lattices
\begin{equation*}
\begin{matrix}
\iota_X: &
(H^*(X,\bb{Z})_{\alg},\langle\,\,,\,\,\rangle) 
& \simto & 
(\Sym_2(\bb{Z}, D),B),\\
& (r,dH_X,a) &
\mapsto & 
\begin{pmatrix}
 r & d\sqrt{D} \\ d\sqrt{D} & a
\end{pmatrix},
\end{matrix}
\end{equation*}
where $\Sym_2(\bb{Z}, D)$ is given by 
\begin{align*}
 \Sym_2(\bb{Z},D):=
 \left\{\begin{pmatrix} x &y \sqrt{D} \\ y\sqrt{D}&z\end{pmatrix}\, 
 \Bigg|\, x,y,z\in \bb{Z}\right\},
\end{align*}
and the bilinear form $B$ on $\Sym_2(\bb{Z},D)$ is given by 
\begin{align*}
B(X_1,X_2) := 2D y_1 y_2-(x_1 z_2+z_1 x_2)
\end{align*}
for
$X_i =\begin{pmatrix}x_i & y_i \sqrt{D} \\ y_i \sqrt{D} &z_i \end{pmatrix}
 \in\Sym_2(\bb{Z},D)$
($i=1,2$).

Each $\Phi_{X \to Y} \in \Eq_0({\bf D}(X),{\bf D}(Y))$ gives an isometry 
\begin{align}
 \iota_{Y} \circ \Phi^H_{X \to Y} \circ \iota_{X}^{-1}
\in \LieO(\Sym_2(\bb{Z}, D)),
\end{align}
where $\LieO(\Sym_2(\bb{Z}, D))$ is the isometry group
of the lattice $(\Sym_2(\bb{Z}, D),B)$. 
Thus we have a map
$$
\eta:
{\cal E} \to \LieO(\Sym_2(\bb{Z}, D))
$$
which preserves the structures of 
multiplications. 

\begin{defn}\label{defn:G}
We set
\begin{align*}
&\widehat{G} := 
\left\{
 \begin{pmatrix} a \sqrt{r} & b \sqrt{s}\\
c \sqrt{s} & d \sqrt{r}
\end{pmatrix}
%\in\GL(2,\bb{R})\,
 \Bigg|\, 
\begin{aligned}
 a,b,c,d,r,s \in \bb{Z},\, r,s>0\\ 
rs=D, \, adr-bcs = \pm1
\end{aligned}
\right\},
\\
&G := \widehat{G}\cap \SL(2,\bb{R}).
\end{align*}
\end{defn}

We have an action $\cdot$ of $\widehat{G}$ on the lattice
$(\Sym_2(\bb{Z}, D),B)$:
 
\begin{align}
\label{eq:action_cdot}
g \cdot  
\begin{pmatrix}
r &d  \sqrt{D}\\d  \sqrt{D}&a \end{pmatrix}
:= 
  g
 \begin{pmatrix}r &d  \sqrt{D}\\d  \sqrt{D}&a \end{pmatrix} 
{}^t g,\;
g \in \widehat{G}.
\end{align}
Thus we have a homomorphism:
$$
\alpha:\widehat{G}/\{\pm 1\} \to \LieO(\Sym_2({\Bbb Z},D)).
$$

\begin{thm}[{\cite[Thm. 6.16, Prop. 6.19]{YY1}}]\label{thm:FM}
Let
$\Phi \in \Eq_0(\bl{D}(Y),\bl{D}(X))$ be an equivalence.
\begin{enumerate}
\item[(1)]
$v_1:=v(\Phi({\cal O}_Y))$ and $v_2:=\Phi(\varrho_Y)$
are positive isotropic Mukai vectors with 
$\langle v_1,v_2 \rangle=-1$ and we can write 
\begin{equation*}
\begin{split}
& v_1=(p_1^2 r_1,p_1 q_1 H_X, q_1^2 r_2),\quad
v_2=(p_2^2 r_2,p_2 q_2 H_X, q_2^2 r_1),\\
& p_1,q_1,p_2,q_2, r_1, r_2 \in {\Bbb Z},\;\;
p_1,r_1,r_2 >0,\\
& r_1 r_2=D,\;\; p_1 q_2 r_1-p_2 q_1 r_2=1.
\end{split}
\end{equation*}
\item[(2)]
We set
\begin{equation*}
{}^t \theta(\Phi):=\pm
\begin{pmatrix}
p_1 \sqrt{r_1} & p_2 \sqrt{r_2}\\
q_1 \sqrt{r_2} & q_2 \sqrt{r_1}
\end{pmatrix}
\in G/\{\pm 1\}.
\end{equation*}
Then ${}^t \theta(\Phi)$ is uniquely determined by 
$\Phi$ and  
we have a map
\begin{equation*}
{}^t \theta:{\cal E} \to G/\{\pm 1\}.
\end{equation*}
\item[(3)]
The action of ${}^t \theta(\Phi)$ on $\Sym_2({\Bbb Z},D)$
is the action of $\Phi$ on the algebraic Mukai lattice: 
\begin{equation*}
\iota_X \circ \Phi(v)
={}^t \theta(\Phi) \cdot\iota_Y(v).
\end{equation*}
\begin{NB}
\begin{equation*}
\iota_X \Phi\iota_Y^{-1}
\begin{pmatrix}r &d  \sqrt{D}\\d  \sqrt{D}&a \end{pmatrix} 
=
{}^t \theta(\Phi)
\begin{pmatrix}r &d  \sqrt{D}\\d  \sqrt{D}&a \end{pmatrix} 
{}^t({}^t\theta(\Phi))
\end{equation*}
\end{NB}
Thus we have the following commutative diagram:
\begin{align}
\label{diag:groups}
\xymatrix{    
{\cal E}  \ar[d]_-{{}^t \theta} \ar[dr]_{\eta} %& \ar[d]^{\alpha}
&
%     \ar@{<-^{)}}[d]
\\
\widehat{G}/\{\pm 1\} \ar[r]_-{\alpha}
&
\LieO(\Sym_2({\Bbb Z},D))   
}
\end{align}

\end{enumerate}
\end{thm}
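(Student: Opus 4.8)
The plan is to transfer everything to the lattice side, using the cohomological transform $\Phi^H$ and the isometry $\iota_X$ of the previous subsection, and to read the matrix off directly from $v_1$ and $v_2$. Since $\Phi$ is an equivalence of derived categories of abelian surfaces, $\Phi^H$ is an isometry of the algebraic Mukai lattice; applying it to the isotropic vectors $v(\mathcal O_Y)=(1,0,0)$ and $\varrho_Y=(0,0,1)$, which satisfy $\langle(1,0,0),(0,0,1)\rangle=-1$, immediately gives that $v_1,v_2$ are isotropic with $\langle v_1,v_2\rangle=-1$, hence primitive. That $v_1,v_2$ are moreover \emph{positive} and are Mukai vectors of semi-homogeneous sheaves is part of the content: $\mathcal O_Y$ and a skyscraper sheaf $k_y$ are semi-homogeneous (Proposition~\ref{prop:stable}), so by Proposition~\ref{prop:WIT} their images are semi-homogeneous sheaves up to shift, and one pins down the shifts using $\mathbf R\Hom(\mathcal O_Y,k_y)=\mathbb C$, Proposition~\ref{prop:semi-homog}, and the hypothesis $\Phi\in\Eq_0$ (so the kernel is a sheaf up to an even shift).

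For the normal form in (1), write $v_1=(r,dH_X,a)$ with $r>0$; isotropy reads $ra=Dd^2$, and primitivity forces, at each prime $\ell$, one of the valuations $v_\ell(r),v_\ell(d),v_\ell(a)$ to vanish. In each case the relation $v_\ell(r)+v_\ell(a)=v_\ell(D)+2v_\ell(d)$ yields nonnegative exponents with $v_\ell(r)=2v_\ell(p_1)+v_\ell(r_1)$, $v_\ell(a)=2v_\ell(q_1)+v_\ell(r_2)$, $v_\ell(d)=v_\ell(p_1)+v_\ell(q_1)$, $v_\ell(r_1)+v_\ell(r_2)=v_\ell(D)$; multiplying over $\ell$ and choosing the sign of $q_1$ gives $v_1=(p_1^2 r_1,p_1 q_1 H_X,q_1^2 r_2)$ with $p_1,r_1,r_2>0$, $r_1 r_2=D$, and likewise $v_2=(p_2^2 s_2,p_2 q_2 H_X,q_2^2 s_1)$ with $s_1 s_2=D$. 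Now $\iota_X(v_1)$ and $\iota_X(v_2)$ are rank-one symmetric matrices, and $B(x\,{}^t x,\, y\,{}^t y)=-(\det[\,x\mid y\,])^2$ for such matrices, so the isometry property of $\iota_X$ turns $\langle v_1,v_2\rangle=-1$ into $(p_1 q_2\sqrt{r_1 s_1}-p_2 q_1\sqrt{r_2 s_2})^2=1$. Since $r_1 s_1,r_2 s_2\in\mathbb Z$, this forces $m:=\sqrt{r_1 s_1}$ and $m':=\sqrt{r_2 s_2}$ to be coprime integers with $mm'=D$; combined with $r_1\mid D$, $s_1\mid D$ and $r_1 s_1=m^2$, a short divisibility argument gives $r_1=s_1=m$ and $r_2=s_2=m'$. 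A final normalization of the signs of $p_2,q_1,q_2$ then produces $p_1 q_2 r_1-p_2 q_1 r_2=1$, i.e.\ (1). The matrix ${}^t\theta(\Phi)$ thus lies in $\widehat G$ (as $r_1 r_2=D$ and $p_1 q_2 r_1-p_2 q_1 r_2=\pm1$) and in fact in $G$ because $\det{}^t\theta(\Phi)=p_1 q_2 r_1-p_2 q_1 r_2=1$.

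For (3) one observes that, by construction, the action of ${}^t\theta(\Phi)$ coincides with $\iota_X\circ\Phi^H\circ\iota_Y^{-1}$ on $\iota_Y(v(\mathcal O_Y))$ and on $\iota_Y(\varrho_Y)$; so it remains to compare them on $\iota_Y(H_Y)$. Both $\iota_X(\Phi^H(H_Y))$ and ${}^t\theta(\Phi)\cdot\iota_Y(H_Y)$ are orthogonal to $\iota_X(v_1),\iota_X(v_2)$ and have norm $(H_Y^2)=2D$; since the rank-two sublattice $\mathbb Z v_1+\mathbb Z v_2$ is non-degenerate, its orthogonal complement in the rank-three algebraic Mukai lattice is a rank-one lattice, in which two vectors of equal nonzero norm differ only by a sign, and the sign is fixed because $\Phi^H$ preserves the orientation of the positive cone. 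This is (3), equivalently the commutativity $\eta=\alpha\circ{}^t\theta$ of \eqref{diag:groups}. Uniqueness of ${}^t\theta(\Phi)$ in $G/\{\pm1\}$ follows since, if two elements of $G$ induce the same action on $\Sym_2(\mathbb Z,D)$, their ratio $h$ satisfies $hX\,{}^t h=X$ for all symmetric $X$, which forces $h$ to be a diagonal sign matrix, hence $h=\pm1$ as $\det h=1$. Finally, since $\cal{E}$ is generated by the subsets $\Eq_0(\bl{D}(Y),\bl{D}(Z))$ and ${}^t\theta$ respects composition (by (3) and this uniqueness), ${}^t\theta$ is well defined on all of $\cal{E}$.

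The step I expect to be the real obstacle is not the bookkeeping above but establishing at the outset that $\Phi^H$ lies in $\alpha(G/\{\pm1\})$: the positivity of $v_1,v_2$, their realization as Mukai vectors of honest semi-homogeneous sheaves, and the orientation statement all ultimately rest on Orlov's classification of Fourier--Mukai kernels on abelian varieties and on Mukai's structure theory of semi-homogeneous sheaves --- in effect on realizing $X$ as a fine moduli space of semi-homogeneous sheaves on $Y$ and invoking its universal object --- and this is where the weight of \cite{YY1} lies. A subsidiary nuisance is keeping track of all the signs (of the $p_i,q_i$, of the shifts, and of the orientation) carefully enough to land in $G$ rather than merely $\widehat G$ and to obtain $+1$ rather than $\pm1$ in the determinant relation.
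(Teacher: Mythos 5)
This theorem is not proved in the paper at all: it is imported verbatim by citation from \cite[Thm.~6.16, Prop.~6.19]{YY1}, so there is no in-paper argument to compare yours against. Judged on its own, your reconstruction follows the same strategy as the actual proof in \cite{YY1}: the isotropy, primitivity and pairing $\langle v_1,v_2\rangle=-1$ are pure lattice facts; the positivity and the realization of $v_1,v_2$ as Mukai vectors of semi-homogeneous sheaves are the genuine geometric content and come from Orlov's description of the kernels together with Mukai's theory, exactly as you say; the normal form $(p^2r_1,pqH_X,q^2r_2)$ and the determinant identity $B(x\,{}^tx,y\,{}^ty)=-(\det[x\mid y])^2$ are the right bookkeeping. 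Two places in your sketch are thinner than they should be. First, in the divisibility step you pass from $(p_1q_2\sqrt{r_1s_1}-p_2q_1\sqrt{r_2s_2})^2=1$ to ``$\sqrt{r_1s_1},\sqrt{r_2s_2}$ are coprime integers''; this needs the cases $p_1q_2=0$ or $p_2q_1=0$ handled separately (they do occur, e.g.\ for the Poincar\'e transform on a principally polarized surface one has $p_1=0$, which incidentally shows the inequality $p_1>0$ in the quoted statement must be read as a normalization convention rather than a consequence). Second, in (3) you fix the residual sign on the rank-one orthogonal complement of ${\Bbb Z}v_1+{\Bbb Z}v_2$ by appealing to orientation-preservation of $\Phi^H$ on the positive cone; that is itself a nontrivial theorem, whereas in \cite{YY1} the sign is pinned down directly from the explicit form of the kernel as a family of semi-homogeneous sheaves (equivalently, by evaluating $\Phi^H$ on a third explicit class). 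Neither point is a wrong turn, but both are real inputs rather than formalities, and your own closing paragraph correctly locates where the weight of \cite{YY1} lies.
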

For $\Phi:\Eq_0({\bf D}(X),{\bf D}(X))$ preserving
the sublattice $L:={\Bbb Z} \oplus {\Bbb Z}H_X \oplus {\Bbb Z}\varrho_X$,
it is easy to see that the same proof of Theorem \ref{thm:FM}
works.
Thus replacing $H^*(X,{\Bbb Z})_{\alg}$ by $L$,
we have a similar claims to Theorem \ref{thm:FM}.
From now on, we identify the Mukai lattice
$L$ with $\Sym_2({\Bbb Z},D)$
via $\iota_X$. 
Then for $g \in \widehat{G}$ and $v \in L$,
$g \cdot v $ means
$\iota_X(g \cdot v)=g \cdot \iota_X(v)$.    
We also set $H:=H_X$.
By \cite[Lem. 2.5]{Y}, we have the following.
\begin{lem}
For
\begin{equation}\label{eq:n}
A=\begin{pmatrix}
a & b \sqrt{D}\\
c \sqrt{D} & d
\end{pmatrix},\;a,b,c,d \in {\Bbb Z}, ad-bcD=1,
\end{equation}
there is a Fourier-Mukai transform
$\Phi_{X \to X}^{{\bf E}_A}:{\bf D}(X) \to {\bf D}(X)$ 
such that $\Phi_{X \to X}^{{\bf E}_A}(L)=L$ with
${}^t \theta(\Phi_{X \to X}^{{\bf E}_A})=A$.
\end{lem}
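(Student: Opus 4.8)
The plan is to realize the matrix $A$ as ${}^t\theta$ of an explicit composition of Fourier--Mukai transforms that are already known to preserve $L$ and to lie in $\cal{E}$, then invoke the homomorphism property of ${}^t\theta$ from Theorem \ref{thm:FM}. First I would observe that, since $\det A = ad-bcD = 1$, the matrix $A$ lies in $G$ in the sense of Definition \ref{defn:G} (take $r=s=1$ if $D=1$, and more generally decompose the entries according to the factorizations of $D$; in fact for the sublattice $L$ the relevant group is the one attached to $D=(H^2)/2$ exactly as in Theorem \ref{thm:FM}(2), so $A$ is a legitimate value of ${}^t\theta$). The content of the lemma is therefore surjectivity of ${}^t\theta$ restricted to autoequivalences preserving $L$, and the natural route is to exhibit generators.

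Next I would write down the two basic autoequivalences of ${\bf D}(X)$ that visibly preserve $L$: tensoring by the line bundle $\cal{O}_X(H)$, and the Fourier--Mukai transform with Poincar\'e kernel ${\bf P}$ on $X\times\widehat X$ composed with a fixed identification $\widehat X\cong X$ (available because $\rk\NS(X)=1$, so $X$ and its dual are isogenous in a way that respects the polarization up to the data recorded in $\Sym_2(\bb Z,D)$). Tensoring by $\cal{O}_X(H)$ has $v(E)\mapsto v(E)\,e^H$, which on $\Sym_2(\bb Z,D)$ is the unipotent action $\begin{pmatrix} 1 & 0 \\ \sqrt D & 1\end{pmatrix}$ (or its transpose, depending on conventions); the Fourier--Mukai transform acts, up to shift, as $\begin{pmatrix} 0 & \sqrt D^{\,-1}\ \\ -\sqrt D & 0 \end{pmatrix}$-type involution, i.e.\ it swaps $\cal{O}_Y$ and $\varrho_Y$ and hence realizes the $S$-type element. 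I would then quote \cite[Lem. 2.5]{Y} as already providing these building blocks, and use the fact that ${}^t\theta$ is a morphism of groupoids (diagram \eqref{diag:groups}) to conclude that the subgroup of $G/\{\pm1\}$ realized by autoequivalences preserving $L$ contains the images of the two generators.

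Finally I would invoke the standard generation statement: the group of integral $2\times 2$ matrices $\begin{pmatrix} a & b\sqrt D \\ c\sqrt D & d\end{pmatrix}$ with $ad-bcD=1$ — which is conjugate, via conjugation by $\mathrm{diag}(1,\sqrt D)$, to a congruence-type subgroup of $\SL_2(\bb Z)$ commensurable with $\Gamma_0(D)$ — is generated by the unipotent $\begin{pmatrix}1&0\\\sqrt D&1\end{pmatrix}$ together with one further involution. Composing the corresponding autoequivalences (each of which preserves $L$, since the property of preserving $L$ is closed under composition and inverse within $\cal{E}$) yields an autoequivalence $\Phi_{X\to X}^{{\bf E}_A}$ with ${}^t\theta(\Phi_{X\to X}^{{\bf E}_A})=A$, and the kernel is automatically an object of ${\bf D}(X\times X)$, a sheaf up to shift by Orlov \cite{Or1}. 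The main obstacle I anticipate is not the group-theoretic generation (which is classical) but the bookkeeping needed to check that the chosen Fourier--Mukai kernels genuinely send $L$ isomorphically onto $L$ rather than onto some other rank-one sublattice — i.e.\ matching the normalization $H:=H_X$ and the identification $L\cong\Sym_2(\bb Z,D)$ across the Poincar\'e transform, which is where the precise form of the isometry $\iota_X$ and the computations of \cite{YY1} must be used carefully. Once that compatibility is in hand, the lemma follows from Theorem \ref{thm:FM} and the homomorphism property of ${}^t\theta$.
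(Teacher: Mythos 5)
Your proposal takes a generators-and-relations route, but this has a genuine gap at its core, and it is also not what the paper does. The paper's proof is a citation of \cite[Lem.~2.5]{Y}, whose content is a \emph{direct} construction: given $A$, one writes down the isotropic Mukai vectors $v_1=(a^2,acH,c^2D)$ and $v_2=(b^2D,bdH,d^2)$, checks $\langle v_1,v_2\rangle=-(ad-bcD)^2=-1$, and takes ${\bf E}_A$ to be a universal family of semi-homogeneous sheaves realizing $v(\mathcal{O}_X)\mapsto v_1$ and $\varrho_X\mapsto v_2$; the existence and the fact that the moduli space is again $X$ come from the theory of semi-homogeneous sheaves in \cite{Mu0}, \cite{YY1}. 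No decomposition of $A$ into generators is needed.

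The gap in your route is twofold. First, your ``$S$-type'' building block does not exist for general $D$: the Poincar\'e transform is an equivalence ${\bf D}(X)\to{\bf D}(\widehat X)$, and $\rk\NS(X)=1$ does \emph{not} give $\widehat X\cong X$ --- they are isogenous but generally not isomorphic unless $D=1$. So for a non-principally polarized $X$ there is no autoequivalence realizing the involution you need, and you would be forced into the groupoid $\cal{E}$ of equivalences between different Fourier--Mukai partners, at which point the generation problem changes entirely. Second, the ``classical generation statement'' you invoke is false: conjugating by $\mathrm{diag}(1,\sqrt D)$ identifies your matrix group with $\Gamma^0(D)\subset\SL_2({\Bbb Z})$, and $\Gamma_0(D)$ (equivalently $\Gamma^0(D)$) is \emph{not} generated by one unipotent and one involution for general $D$; its minimal number of generators grows with $D$ (already for $D=11$ the associated modular curve has positive genus). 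So even if both building blocks existed, the group-theoretic step would fail. Your approach does work verbatim in the principally polarized case $D=1$, where it recovers Mukai's classical $\SL_2({\Bbb Z})$-action, but the lemma is stated for arbitrary $D$, and that is precisely why the paper relies on the moduli-theoretic construction rather than on generators.
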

\begin{NB}
For $v_1=(a^2,acH,c^2 D), v_2=(b^2 D,bdH, d^2)$,
we take a family of semi-homogeneous sheaves ${\bf E}_A$
such that $\Phi_{X \to X}^{{\bf E}_A}(v({\cal O}_X))=v_1$ and
$\Phi_{X \to X}^{{\bf E}_A}(\varrho_X)=v_2$.
Then $L$ is preserved.
\end{NB}

Replacing $A$ by $-A$ if necessary,
we assume that $\tr A \geq 0$.
\begin{defn}\label{defn:A}
For a matrix $A$ in \eqref{eq:n} such that $\tr A \geq 0$,  
let $\Phi_{X \to X}^{{\bf E}_A}:{\bf D}(X) \to {\bf D}(X)$ 
such that ${\bf E}_A \in \Coh(X \times X)$ and 
$\Phi_{X \to X}^{{\bf E}_A}(L)=L$ with
${}^t \theta(\Phi_{X \to X}^{{\bf E}_A})=A$.
\end{defn}
We note that 
\begin{equation}\label{eq:v_1}
v(({\bf E}_A)_{|\{x\} \times X})=(b^2 D,bd H, d^2),\;
v(({\bf E}_A^{\vee})_{|X \times \{x \}})=(b^2 D,-ba H, a^2).
\end{equation}

\begin{rem}\label{rem:T}
If ${\bf E}'_A$ also induces the same cohomological transform $\Phi$,
then
there is an isomorphism $f:X \to X$ and a line bundle $P \in \Pic^0(X)$
such that ${\bf E}'_A=(1_X \times f)^*({\bf E}_A) \otimes p_2^*(P)$.

Let $T$ be a subgroup of $\Eq_0({\bf D}(X),{\bf D}(X))$
induced by $\Aut_H(X) \times \Pic^0(X)$, where
$\Aut_H(X)$ is the group of isomorphisms preserving $H$.
Then $T$ preserves $L$ and
$\Phi_{X \to X}^{{\bf E}_A} \mod T$ is determined by $A$.
\end{rem}

\begin{rem}
If $X$ is an abelian surface with $\End(X) \cong {\Bbb Z}$, then
$\NS(X)={\Bbb Z}H$ with $(H^2)=2D$ and 
for all autoequivalences $\Phi$, 
${}^t \theta(\Phi)$ is of the form \eqref{eq:n}.
\end{rem}

\begin{rem}
For an equivalence
$\Phi_{X \to X}^{{\bf E}}:{\bf D}(X) \to {\bf D}(X)$
such that ${\bf E} \in \Coh(X \times X)$ and
$\Phi_{X \to X}^{{\bf E}}$ preserves $L$.
We set 
\begin{equation}
B:={}^t \theta(\Phi_{X \to X}^{{\bf E}})=
\begin{pmatrix}
p_1 \sqrt{r_1} & p_2 \sqrt{r_2}\\
q_1 \sqrt{r_2} & q_2 \sqrt{r_1}
\end{pmatrix}
\end{equation}
as in Theorem \ref{thm:FM} (2).
Then $B^2$ is of the form \eqref{eq:n}.
If $(p_1+q_2)p_2 >0$ or $p_2=0$, then 
$(\Phi_{X \to X}^{{\bf E}_B})^2 \equiv \Phi_{X \to X}^{{\bf E}_{B^2}} \mod T$.
If $(p_1+q_2)p_2<0$ or $p_1+q_2=0$, then
$(\Phi_{X \to X}^{{\bf E}_B})^2 \equiv 
\Phi_{X \to X}^{{\bf E}_{B^2}[-2]} \mod T$.
Hence for the computation of $h_t(\Phi_{X \to X}^{{\bf E}_B})$
is reduced to the computation of $h_t(\Phi_{X \to X}^{{\bf E}_A})$.
In particular, 
if $\NS(X)={\Bbb Z}H$, then we can compute $h_t$ for all equivalences.
\end{rem}

Since the eigen equation of $A$ in \eqref{eq:n} is $x^2-(a+d)x+1=0$,
$A$ has two real eigen value $\alpha>\beta=1/\alpha$
unless $\tr A=0, 1,2$.
If $\tr A=0,1$, then $A^4=E, A^6=E$.
If $\tr A=2$, then $(A- E)^2=0$, and hence
\begin{equation}\label{eq:tr=2}
A^n=E+n(A-E).
\end{equation}  

Assume that $\tr A>2$. Then 
\begin{equation}\label{eq:irrational}
\alpha, \beta \not \in {\Bbb Q},
\end{equation}
since ${\Bbb Z}$ is integrally closed and $0<\beta<1$.
\begin{NB}
\begin{equation}
A^n=\frac{\alpha^n-\beta^n}{\alpha-\beta}A+
\frac{\alpha^n \beta-\alpha \beta^n}{\beta-\alpha}E
\end{equation}
\end{NB}
We set 
\begin{equation}
P:=\frac{1}{\alpha-\beta}(A-\beta E),\;
Q:=\frac{1}{\beta-\alpha}(A-\alpha E).
\end{equation}
Then 
\begin{NB}
\begin{equation}
A=\alpha P+\beta Q,\;P+Q=E,\,PQ=0.
\end{equation}
\end{NB}

\begin{equation}\label{eq:tr>2}
A^n=\alpha^n P+\beta^n Q.
\end{equation}

Let $E$ be a semi-homogeneous sheaf on $X$ with 
$v(E)=(p^2,pqH,q^2 D)$, $p^2,pq, q^2 D \in {\Bbb Z}$. 
Then 
\begin{equation}
\iota_X(v(L))=u {}^t u,\; u=\begin{pmatrix} p\\
q \sqrt{D}
\end{pmatrix}. 
\end{equation}
We set
\begin{equation}
\begin{pmatrix}
p_n\\
q_n \sqrt{D}
\end{pmatrix}=A^n u=\alpha^n Pu+\beta^n Q u.
\end{equation}
Then 
\begin{equation}
\iota_X(v(\Phi^n(E)))=
\begin{pmatrix}
p_n\\
q_n \sqrt{D}
\end{pmatrix}
\begin{pmatrix}
p_n &
q_n \sqrt{D}
\end{pmatrix}.
\end{equation}
Let 
\begin{equation}
u_\alpha=\begin{pmatrix}
1\\
s \sqrt{D}
\end{pmatrix}
\end{equation}
be an eigen vector with respect to $\alpha$.
Thus $s=\frac{\alpha-a}{bD}$.
By \eqref{eq:irrational},
$(A-\beta E) u \ne 0$, and hence
\begin{equation}\label{eq:p-q}
\lim_{n \to \infty} \frac{q_n}{p_n}=s.  
\end{equation}
%$A^n u=\alpha^n(Pu+\frac{\beta^n}{\alpha^n}Qu)$.
%
\begin{NB}
If $A$ has eigen value $\alpha,\beta \in {\Bbb C}$,
then the eigen equation for the action on $\Sym_2({\Bbb R},D)$
is 
$(x-\alpha^2)(x-\alpha \beta)(x-\beta^2)=
(x^2-((\tr A)^2-2\det A)x+(\det A)^2)(x-\det A)$. 
\end{NB}
The following is obvious.
\begin{lem}\label{eq:eigen}
Assume that $x^2-\tr A x+\det A=(x-\alpha)(x-\beta)$, $\alpha, 
\beta \in {\Bbb C}$.
Then the eigen equation of the representation matrix of the action of $A$ on 
$\Sym_2({\Bbb R},D)$ is
\begin{equation}
(x-\alpha^2)(x-\alpha \beta)(x-\beta^2)=
(x^2-((\tr A)^2-2\det A)x+(\det A)^2)(x-\det A).
\end{equation} 
In particular, if $|\alpha| \geq |\beta|$, then
$|\alpha|^2$ is the spectral radius of this representation. 
\end{lem}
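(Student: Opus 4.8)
The plan is to recognize the action $g\cdot X = gX\,{}^t g$ on $\Sym_2(\mathbb{R},D)\otimes_{\mathbb{Z}}\mathbb{R}$ as (a twist of) the symmetric square of the standard two-dimensional representation, and then simply read off its characteristic polynomial from the eigenvalues of $A$.

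First I would observe that the $\mathbb{R}$-linear isomorphism $(x,y,z)\mapsto\begin{pmatrix} x & y\sqrt D\\ y\sqrt D & z\end{pmatrix}$ identifies $\Sym_2(\mathbb{R},D)$ with the space $\Sym_2(\mathbb{R})$ of \emph{all} symmetric $2\times 2$ real matrices, carrying the action $\cdot$ to $X\mapsto AX\,{}^t A$. Complexifying, $\Sym_2(\mathbb{C})$ is the symmetric part of $\mathbb{C}^2\otimes\mathbb{C}^2$ and is stable under $A\otimes A$, so the representation matrix of the action of $A$ is, after base change, the restriction of $A\otimes A$ to that summand. Assuming first that $A$ is diagonalizable over $\mathbb{C}$ with eigenvectors $v_\alpha,v_\beta$ for $\alpha,\beta$, the three matrices $v_\alpha\,{}^t v_\alpha$, $v_\alpha\,{}^t v_\beta+v_\beta\,{}^t v_\alpha$, $v_\beta\,{}^t v_\beta$ form a basis of $\Sym_2(\mathbb{C})$ consisting of eigenvectors of $X\mapsto AX\,{}^t A$ with eigenvalues $\alpha^2$, $\alpha\beta$, $\beta^2$. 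Hence the characteristic polynomial of the action of $A$ on $\Sym_2(\mathbb{R},D)$ is $(x-\alpha^2)(x-\alpha\beta)(x-\beta^2)$. Since the entries of this representation matrix are polynomials in the entries of $A$, the identity of characteristic polynomials extends to all $A$ by density of the diagonalizable matrices (or, when $\alpha=\beta$, by a one-line computation in a Jordan basis).

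It remains to rewrite the right-hand side in terms of $\tr A$ and $\det A$. From $\alpha+\beta=\tr A$ and $\alpha\beta=\det A$ one gets $\alpha^2+\beta^2=(\tr A)^2-2\det A$ and $\alpha^2\beta^2=(\det A)^2$, so $(x-\alpha^2)(x-\beta^2)=x^2-((\tr A)^2-2\det A)x+(\det A)^2$, while $x-\alpha\beta=x-\det A$; multiplying these gives the asserted factorization. For the final assertion, the eigenvalues of the action are $\alpha^2,\alpha\beta,\beta^2$, and $|\alpha|\ge|\beta|$ forces $|\alpha|^2\ge|\alpha\beta|$ and $|\alpha|^2\ge|\beta|^2$, so the spectral radius equals $|\alpha|^2$.

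There is essentially no hard step here — this is the "obvious" lemma the text advertises. The only point deserving a word of care is the passage from diagonalizable $A$ to an arbitrary $A$ (in particular the repeated-eigenvalue case), which is dispatched by the density/continuity argument above; everything else is elementary linear algebra.
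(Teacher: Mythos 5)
Your argument is correct; the paper itself offers no proof beyond the remark ``The following is obvious,'' and your computation --- identifying the action $X\mapsto AX\,{}^tA$ on $\Sym_2({\Bbb R},D)$ with the symmetric square of the standard representation, reading off the eigenvalues $\alpha^2,\alpha\beta,\beta^2$ from rank-one symmetric eigenvectors, and handling the non-diagonalizable case by density --- is exactly the intended elementary verification. Nothing further is needed.
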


\begin{NB}
$\Phi$ also preserves $L^\perp$, which is negative definite.
Hence the eigen values are root of unity.

Indeed let $B$ be the intersection matrix.
Then $B={}^t Q Q$.
If ${}^t P B P=B$, then
${}^t (Q PQ^{-1}) (QPQ^{-1})=E$.
Hence $QPQ^{-1}$ is a orthogonal matrix.
Hence the eigen values of $QPQ^{-1}$ and hence of
$P$ are root of unity. 
\end{NB}

\section{Computation of entropy}\label{sect:computation}

For an endofunctor $\Phi:{\bf D}(X) \to {\bf D}(X)$,
let $h_t(\Phi):{\Bbb R} \to \{-\infty\} \cup {\Bbb R}$
be the entropy defined in \cite[Defn. 2.5]{DHKK}.
In this section, we shall compute the entropy  
for the equivalence $\Phi:=\Phi_{X \to X}^{{\bf E}_A}$ 
by using the following result.
\begin{NB}
Let $G$ be a split generator of ${\bf D}(X)$.
 For any $E \in {\bf D}(X)\setminus\{0\}$, 
 we have the following collection of triangles
 \begin{align}\label{diag:HN}
 \xymatrix{
    0=E_0   \ar[rr]         &
  & E_1     \ar[dl] \ar[rr] &
  & E_2     \ar[r]  \ar[dl] & \cdots %\ar[r]     
   E_{n-1} \ar[rr]         &
  & E_n =E \oplus E' \ar[dl]
 \\
                            & G[k_1] \ar[ul]^{[1]} 
  &                         & G[k_2] \ar[ul]^{[1]}
  &                        % &                     
  &                         & G[k_n] \ar[ul]^{[1]}
 }
 \end{align}
We set
\begin{equation}
\delta_t(G,E):=\inf \left\{\sum_{i=1}^n e^{k_i t} \right\},
\end{equation} 
where $k_i$ are in the diagram \eqref{diag:HN}.
\begin{defn}
For an exact endofunctor $\Phi$ of ${\bf D}(X)$, we define
\begin{equation}
h_t(\Phi):=\lim_{n \to \infty} \frac{1}{n}\log \delta_t(G,\Phi^n(G)).
\end{equation}
\end{defn}

\end{NB}

\begin{prop}[{\cite[Thm. 2.7]{DHKK} and \cite[Prop. 3.8]{KT}}]
Let $G,G'$ be split generators of ${\bf D}(X)$ and $F$ an endofunctor of
${\bf D}(X)$ of Fourier-Mukai type such that $F^n G,F^n G' \not \cong 0$
for all $n>0$. Then 
\begin{equation}
h_t(F)=\lim_{n \to \infty}\frac{1}{n}\log \delta_t'(G,F^n G')
\end{equation} 
 where
\begin{equation}
\delta_t'(M,N):=\sum_{m \in {\Bbb Z}}
\dim \Hom(M,N[m])e^{-mt},\;M,N \in {\bf D}(X).
\end{equation}
\end{prop}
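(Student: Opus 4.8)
The plan is to sandwich the Dimitrov--Haiden--Katzarkov--Kontsevich complexity $\delta_t$ between constant multiples of $\delta_t'$ and then transport the known properties of $\delta_t$ to $\delta_t'$. Since $X$ is smooth projective, each $\Hom(G,G[\ell])$ is finite-dimensional and vanishes outside a finite range, so $\delta_t'(G,G)=\sum_\ell \dim\Hom(G,G[\ell])e^{-\ell t}$ is a finite positive number for fixed $t$. I will freely use the properties of $\delta_t$ from \cite[Thm.~2.7]{DHKK}: submultiplicativity $\delta_t(M,P)\le \delta_t(M,N)\,\delta_t(N,P)$; monotonicity $\delta_t(\Psi M,\Psi N)\le \delta_t(M,N)$ under an exact functor $\Psi$; subadditivity $\delta_t(G,B)\le \delta_t(G,A)+\delta_t(G,C)$ for a triangle $A\to B\to C\to A[1]$; the normalization $\delta_t(G,G[k])=e^{kt}$; and the fact that $\delta_t$ decreases on passing to a direct summand. \textbf{Step 1 (reduction to a mixed expression).} By definition $h_t(F)=\lim_n\tfrac1n\log\delta_t(G',F^nG')$. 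Applying $F^n$ to optimal filtrations and using submultiplicativity gives $\delta_t(G,F^nG)\le \delta_t(G',G)\,\delta_t(G,F^nG')$ and $\delta_t(G,F^nG')\le \delta_t(G,G')\,\delta_t(G,F^nG)$, so the two sequences differ by constants independent of $n$; hence $\lim_n\tfrac1n\log\delta_t(G,F^nG')=h_t(F)$, and it suffices to compare $\delta_t'(G,F^nG')$ with $\delta_t(G,F^nG')$.

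\textbf{Step 2 (upper bound, the easy inequality).} Fix $E$ and a filtration of $E\oplus E'$ with cones $G[k_i]$. As $\Hom(G,E[m])$ is a summand of $\Hom(G,(E\oplus E')[m])$, the long exact sequences yield $\dim\Hom(G,E[m])\le \sum_i \dim\Hom(G,G[m+k_i])$. Multiplying by $e^{-mt}$, summing over $m$, and reindexing gives $\delta_t'(G,E)\le\big(\sum_i e^{k_i t}\big)\,\delta_t'(G,G)$; taking the infimum over filtrations yields
\[
\delta_t'(G,E)\ \le\ \delta_t(G,E)\,\delta_t'(G,G).
\]
With $E=F^nG'$ this gives $\limsup_n\tfrac1n\log\delta_t'(G,F^nG')\le h_t(F)$.

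\textbf{Step 3 (lower bound, the main point).} Here I use that $G$ is a split generator of ${\bf D}(X)$; since $X$ is smooth projective, $G$ is in fact a strong generator, so there is an integer $N$ with ${\bf D}(X)=\langle G\rangle_N$. For any $E$ form the evaluation tower: $E_0:=E$, $P_j:=\bigoplus_m \Hom(G,E_j[m])\otimes G[-m]$ (a finite sum of shifts of $G$), and $E_{j+1}:=\Cone(\mathrm{ev}\colon P_j\to E_j)$. Each $\mathrm{ev}$ is surjective on $\Hom(G,-[\ast])$ (it hits $\mathrm{id}_G$), so every structure map $E_j\to E_{j+1}$ is a ghost; by the ghost lemma the $N$-fold composite $E_0\to E_N$ vanishes, and the associated Postnikov system exhibits $E$ as a direct summand of an iterated extension $\widetilde T$ of $P_0,P_1[-1],\dots,P_{N-1}[-(N-1)]$. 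Two estimates control the cost: from the definition of $P_j$ one has $\delta_t(G,P_j)\le \delta_t'(G,E_j)$; and the same long exact sequence gives $\delta_t'(G,E_{j+1})\le K\,\delta_t'(G,E_j)$ with $K:=e^t\delta_t'(G,G)$, whence $\delta_t'(G,E_j)\le K^j\delta_t'(G,E)$. Since $\delta_t$ decreases on summands,
\[
\delta_t(G,E)\ \le\ \delta_t(G,\widetilde T)\ \le\ \sum_{j=0}^{N-1}e^{-jt}\,\delta_t(G,P_j)\ \le\ \Big(\sum_{j=0}^{N-1}(Ke^{-t})^{j}\Big)\,\delta_t'(G,E)\ =:\ C\,\delta_t'(G,E),
\]
with $C$ independent of $E$. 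Taking $E=F^nG'$ gives $\liminf_n\tfrac1n\log\delta_t'(G,F^nG')\ge h_t(F)$, which together with Step 2 proves the equality; the hypothesis $F^nG,F^nG'\not\cong0$ ensures all quantities are strictly positive, so the logarithms are finite. The main obstacle is exactly Step 3: verifying that the ghost-lemma/Postnikov argument (where split, hence strong, generation enters) realizes $E$ as a summand of an extension of only $N$ blocks $P_j$, and that each evaluation step inflates $\delta_t'$ by no more than the fixed factor $K$.
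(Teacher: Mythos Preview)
The paper does not give its own proof of this proposition: it is quoted as a black box from \cite[Thm.~2.7]{DHKK} and \cite[Prop.~3.8]{KT} and then used in the entropy computations of Section~\ref{sect:computation}. So there is nothing in the paper to compare against; what you have written is essentially a reconstruction of the argument behind those cited references, and the strategy---sandwiching $\delta_t'$ between constant multiples of $\delta_t$, with the lower bound coming from strong generation and the ghost lemma---is the standard one.

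Your sketch is sound, with two cosmetic imprecisions that do not affect the conclusion. First, in Step~3 the graded pieces of the Postnikov tower carry no extra shift: writing $T_j:=\operatorname{fib}(E_0\to E_j)$, the octahedral axiom applied to $E_0\to E_j\to E_{j+1}$ yields a triangle $T_j\to T_{j+1}\to P_j$, so the successive cones are $P_0,P_1,\dots,P_{N-1}$ rather than $P_j[-j]$. Second, the recursion constant should be $K=1+e^{t}\delta_t'(G,G)$: from the triangle $E_j\to E_{j+1}\to P_j[1]$ one gets $\delta_t'(G,E_{j+1})\le \delta_t'(G,E_j)+e^{t}\delta_t'(G,P_j)$, and $\delta_t'(G,P_j)=\delta_t'(G,E_j)\,\delta_t'(G,G)$. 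Both corrections only alter the constant $C$ in the final inequality $\delta_t(G,E)\le C\,\delta_t'(G,E)$, so the limit computation goes through unchanged.
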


\begin{thm}\label{thm:h_t}
Let $\Phi:=\Phi_{X \to X}^{{\bf E}_A}$ be an equivalence associated to
$A$ (see Definition \ref{defn:A}). 
\begin{enumerate}
\item[(1)]
 Assume $b=0$.  Then $h_t(\Phi)=\log \rho(\Phi)=0$.
\item[(2)]
 Assume that $b>0$. Then
\begin{equation}
h_t(\Phi)=
\begin{cases}
\log \rho(\Phi), & \tr A \geq 2\\
\log \rho(\Phi)-\frac{2}{3}t, & \tr A=1\\
\log \rho(\Phi)-t, & \tr A=0. 
\end{cases}
\end{equation}
\item[(3)]
Assume that $b<0$. Then
\begin{equation}
h_t(\Phi)=
\begin{cases}
\log \rho(\Phi)-2t, & \tr A \geq 2\\
\log \rho(\Phi)-\frac{4}{3}t, & \tr A=1\\
\log \rho(\Phi)-t, & \tr A=0. 
\end{cases}
\end{equation}
\end{enumerate}
\end{thm}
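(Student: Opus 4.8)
The plan is to use the Proposition quoted from \cite{DHKK} and \cite{KT}: choose a convenient split generator $G$, and compute the asymptotics of $\delta_t'(G,\Phi^n G)=\sum_m \dim\Hom(G,\Phi^n(G)[m])e^{-mt}$. The natural choice is a semi-homogeneous generator, e.g. $G=\bigoplus_{i} E_i$ where the $E_i$ are semi-homogeneous sheaves (locally free and/or torsion) whose Mukai vectors span $L$ over $\mathbb{Q}$; such a $G$ is a split generator of ${\bf D}(X)$. By Proposition \ref{prop:WIT}, each $\Phi^n(E_i)$ is, up to a shift $[-m_n^{(i)}]$, again a semi-homogeneous sheaf, so the whole problem reduces to (a) tracking the \emph{cohomological} shift $m_n^{(i)}$ forced on $\Phi^n(E_i)$ to make it a sheaf, and (b) computing $\sum_m \dim\Hom(E_j,F[m])e^{-mt}$ for two semi-homogeneous sheaves $E_j,F$, for which Proposition \ref{prop:semi-homog} gives complete vanishing information: when $\langle v(E_j),v(F)\rangle\ne 0$, at most one $\Ext^k$ is nonzero, and its dimension is $\pm\langle v(E_j),v(F)\rangle$ by Riemann–Roch on the abelian surface (where $\chi$ equals $-\langle\,,\,\rangle$ and the $\Todd$ class contributes nothing).

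First I would treat $\tr A\ge 2$ with $b>0$. Here $\rho(\Phi)=\alpha^2$ by Lemma \ref{eq:eigen}, and by \eqref{eq:tr>2}, \eqref{eq:p-q} the Mukai vector $v(\Phi^n(E))$ has $\iota_X(v(\Phi^n E))=\binom{p_n}{q_n\sqrt D}\bigl(p_n\ q_n\sqrt D\bigr)$ with $p_n,q_n$ growing like $\alpha^n$ and $q_n/p_n\to s>0$. The key point is that for large $n$ the slope $\mu(\Phi^n E)=(q_n/p_n)H$ stabilizes near $sH$, which is a \emph{fixed} ample class, so the relative position (in the sense of $H$-slope and of the sign of $\langle v(E_j),v(\Phi^n E)\rangle$) of $\Phi^n(E)$ against the fixed generator pieces $E_j$ becomes eventually constant; hence the needed $\Ext^k(E_j,\Phi^n E)$ sits in a single fixed degree $m$ for each pair, with dimension $\asymp |\langle v(E_j),v(\Phi^n E)\rangle|\asymp \alpha^{2n}$. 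This gives $\delta_t'(G,\Phi^n G)\asymp e^{(m\text{-fixed})t}\alpha^{2n}$, so $h_t(\Phi)=2\log\alpha=\log\rho(\Phi)$, independent of $t$. The case $\tr A=2$, $b>0$ is the same computation but with $A^n=E+n(A-E)$ from \eqref{eq:tr=2}, so $p_n,q_n$ grow linearly, $\langle v(E_j),v(\Phi^n E)\rangle\asymp n^2$, $\rho(\Phi)=1$, and still $h_t=0=\log\rho(\Phi)$.

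Next, the cases $\tr A\in\{0,1\}$: now $A$ has finite order ($A^4=E$ or $A^6=E$), so $\Phi^k\equiv(\text{autoequivalence acting trivially on }L)$ up to $T$ and up to shift for $k=4$ or $6$; equivalently, $\Phi$ itself acts on $L$ with finite order but \emph{shifts cohomological degree}, and the entire $t$-dependence comes from the accumulated shift $m_n$ per step. Concretely one computes, for $E=\mathcal{O}_X$ say, the sheaf $\Phi(\mathcal{O}_X)[?]$ and reads off from \eqref{eq:v_1} and the sign conventions the shift incurred; iterating, $m_n\sim \lambda n$ for an explicit rate $\lambda$ (it should come out to $\lambda=2/3$ when $\tr A=1$, $b>0$; $\lambda=1$ when $\tr A=0$, $b>0$; $\lambda=4/3$ and $1$ in the corresponding $b<0$ cases), while $\dim\Hom(G,\Phi^n G[m])$ stays bounded (periodic) because the Mukai vectors are bounded. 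Then $\delta_t'(G,\Phi^n G)\asymp e^{-\lambda n t}\cdot(\text{bounded})$ forces $h_t(\Phi)=-\lambda t$; since $\rho(\Phi)=1$ here, $\log\rho(\Phi)=0$ and this matches the stated formulas. Finally the case $b<0$ with $\tr A\ge 2$: the Fourier–Mukai kernel ${\bf E}_A$ now has $v(({\bf E}_A)_{|\{x\}\times X})=(b^2D,bdH,d^2)$ with $b<0$, which flips the stability/WIT behavior of $\Phi$ on sheaves (as in the ``$-2$ shift'' phenomenon already visible in the Remark after Definition \ref{defn:A}); tracking this gives an extra uniform shift of $-2$ per application of $\Phi$, hence the $-2t$ correction, and similarly $-4t/3$ when combined with the $\tr A=1$ torsion behavior.

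The main obstacle I expect is bookkeeping the cohomological shifts $m_n^{(i)}$ precisely and uniformly in $i$ — i.e. showing that after finitely many steps the shift increments become exactly periodic with the claimed average, and that the ``error'' pairs (those with $\langle v(E_j),\Phi^n v\rangle=0$, or with $\Phi^n E$ a sheaf vs. a shifted sheaf near the transition) contribute negligibly to the $\limsup$. This is where Proposition \ref{prop:semi-homog}(1)(b)--(3) must be invoked carefully: one needs that whenever $\langle v(E_j),v(\Phi^n E)\rangle<0$ the relevant $\Ext$'s vanish except in the one degree dictated by the $H$-slopes, and that the $H$-slope of $\Phi^n E$ is eventually on the correct side of that of $E_j$ — which follows from \eqref{eq:p-q} together with the fact that $s$ is irrational (by \eqref{eq:irrational}) so no exact slope coincidence occurs for large $n$. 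Once the per-step shift rate $\lambda$ and the polynomial-vs-exponential growth of $\sum_m\dim\Hom$ are pinned down, assembling $h_t$ via the Proposition is immediate.
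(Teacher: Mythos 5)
Your proposal follows essentially the same route as the paper: take a split generator built from semi-homogeneous sheaves (the paper uses $G=\bigoplus_{-3\le i\le -1}N^{\otimes i}$, $G'=\bigoplus_{1\le i\le 3}N^{\otimes i}$ with $c_1(N)=mH$, $m\gg 0$), use Proposition \ref{prop:WIT} to reduce to tracking the cohomological shift of $\Phi^n(N^{\otimes j})$, show via the slope dynamics $x_{n+1}=(c+dx_n)/(a+bDx_n)\to s$ and Proposition \ref{prop:semi-homog} that the shift is eventually constant for $b>0$ and increments by $2$ per step for $b<0$, get the $\alpha^{2n}$ (resp.\ $n^2$) growth of $\chi$ from Riemann--Roch, and handle $\tr A\in\{0,1\}$ by $\Phi^2\equiv[-2]$ or $\Phi^3\equiv[-2],[-4]\bmod T$. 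The one caveat is that ``Mukai vectors spanning $L$ over ${\Bbb Q}$'' does not by itself make $\bigoplus_i E_i$ a split generator; the paper's concrete choice of line-bundle powers, justified by Orlov's generation theorem \cite{Or2}, is the instantiation your argument actually needs.
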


\begin{rem}
If $\tr A \leq 2$, then $\rho(\Phi)=1$.
\end{rem}
 
\begin{proof}
Let $N$ be a line bundle with $c_1(N)=mH$, $m>0$.
We take 
\begin{equation}\label{eq:split-gen}
G=\bigoplus_{-3 \leq i \leq -1} N^{ \otimes i},\;
G'=\bigoplus_{1 \leq i \leq 3} N^{ \otimes i}
\end{equation}
as split generators of ${\bf D}(X)$ (\cite{Or2}).

(1)
Assume that $b=0$.
Then $\Phi_{X \to X}^{{\bf E}_A}$ is an equivalence 
defined by $\Aut(X) \times \Pic(X)$.
Then it is easy to see that
\begin{equation}
h_t(\Phi_{X \to X}^{{\bf E}_A})=0=\log \rho(\Phi_{X \to X}^{{\bf E}_A}).
\end{equation}

\begin{NB}
We note that $\Phi^n(G')$ is a locally free sheaf.
Hence $\delta_t'(G,\Phi^n(G'))=\chi(G,\Phi^n(G'))$.
For an isomorphism $f:X \to X$,
$f^*(H)=H$, and hence $f^*$ acts trivially on the Mukai lattice. 
Hence $\Phi$ acts on the Mukai lattice by 
$e^{dH}$. In particular $\Phi^n(N^{\otimes i})=N^{\otimes i}(dnH)$.
It is easy to see that $\chi(G,\Phi^n(G'))$ is a quadratic polynomial of $n$.
Hence $h_t( \Phi_{X \to X}^{{\bf E}_A})=0$.

\end{NB}

We next prove (2) and (3). 
So we assume that $b \ne 0$.

(I). We first treat the case where $\tr A>2$.
We set 
\begin{equation}
\begin{pmatrix}
p_{i,n}\\
q_{i,n} \sqrt{D}
\end{pmatrix}
=A^n 
\begin{pmatrix}
1 \\
im \sqrt{D}
\end{pmatrix}.
\end{equation}
Then 
$v(\Phi^n(N^{\otimes i}))=(p_{i,n}^2,p_{i,n}q_{i,n}H,q_{i,n}^2 D)$.
For a sufficiently large $n$, 
$q_{i,n}/p_{i,n}$ is sufficiently close to $s$ by \eqref{eq:p-q}.
Hence we can take an integer $m$ such that 
\begin{equation}\label{eq:m}
\frac{q_{i,n}}{p_{i,n}}>-m
\end{equation}
for all sufficiently large $n$ (depending on $m$).
We note that $\mu(({\bf E}_A^{\vee})_{|X \times \{x \}})=-\frac{a}{bD}H$ and
\begin{equation}\label{eq:s}
s+\frac{a}{bD}=\frac{\alpha-a}{bD}+\frac{a}{bD}=\frac{\alpha}{bD}.
\end{equation}
Let $E$ be a semi-homogeneous sheaf with $\mu(E)=xH$.
We set $\mu(\Phi^n(E))=x_n H$.
Then 
\begin{equation}
x_{n+1}=\frac{c+dx_n}{a+bD x_n}.
\end{equation}

Assume that $b>0$.
Then \eqref{eq:s} implies $s>-\frac{a}{bD}$.
If $|s-x|<\frac{\alpha-1}{|b|D}$, then 
\begin{equation}
\left| \frac{c+dx}{a+bD x} -s\right| \leq \frac{|x-s|}{\alpha}.
\end{equation}
Hence $x_n$ also satisfies the same condition.
If $E$ also satisfies $x>-\frac{a}{bD}$, then 
$x_n>-\frac{a}{bD}$ for all $n \geq 0$.
Hence 
$\Phi^n(E) \in \Coh(X)$ for $n \geq 0$ by Proposition \ref{prop:semi-homog}.

We set $\Phi^n(N^{\otimes i})=E_n^i [\phi_i(n)]$, $E_n^i \in \Coh(X)$
(cf. Proposition \ref{prop:WIT}).
%For $E:=E_n^i$
%Proposition \ref{prop:semi-homog},
For  $E:=E_{n_0}^i$,
$\mu(E)=\frac{p_{i,n_0}}{q_{i,n_0}}H$. Hence 
by \eqref{eq:p-q},
we get $\Phi^{n'}(E_{n_0}^i) \in \Coh(X)$ for $n' \geq 0$ and $n_0 \gg 0$.
Therefore
$\phi_i(n) \in 2{\Bbb Z}$ is constant for a sufficiently large $n$.
Hence there are $l_i$ such that 
$\Hom(G,\Phi^n(N^{\otimes i})[k])=0$ for $k \ne l_i$ and
\begin{equation}
\begin{split}
\delta_t'(G,\Phi^n(G'))=& \sum_{-3 \leq i \leq -1}
\sum_{1 \leq j \leq 3}
\chi(N^{\otimes i},\Phi^n(N^{\otimes j}))e^{-l_j t}\\
=& \sum_{-3 \leq i \leq -1}
\sum_{1 \leq j \leq 3}
(\alpha^n a_{i,j}+\beta^n b_{i,j})^2 e^{-l_j t}
\end{split}
\end{equation}
where
\begin{equation}
\begin{split}
\alpha^n a_{i,j}+\beta^n b_{i,j}
=& 
\det \left(\begin{pmatrix}
1\\
im \sqrt{D}
\end{pmatrix},
 (\alpha^n P+\beta^n Q)\begin{pmatrix}
1\\
jm \sqrt{D} 
\end{pmatrix}
\right)\\
=& \det \left(\begin{pmatrix}
1\\
im \sqrt{D}
\end{pmatrix},
A^n \begin{pmatrix}
1\\
jm \sqrt{D} 
\end{pmatrix}
\right)
\end{split}
\end{equation}
(see \eqref{eq:tr>2}).
By \eqref{eq:m}, $a_{i,j} \ne 0$ for all sufficiently large $n$. 
Hence $\log \delta_t'(G,\Phi^n(G')) \sim 2n \log |\alpha|$ and 

\begin{equation}
h_t(\Phi)=\lim_{n \to \infty} \frac{1}{n} \log \delta_t'(G,\Phi^n(G'))=
2\log |\alpha|=\log \rho(\Phi).
\end{equation}

Assume that $b<0$.
Then \eqref{eq:s} implies $s <-\frac{a}{bD}$.
If $|s-x|<\frac{\alpha-1}{|b|D}$, then 
\begin{equation}
\left| \frac{c+dx}{a+bD x} -s\right| \leq \frac{|x-s|}{\alpha}.
\end{equation}
Hence $x_n$ also satisfies the same condition.
If $E$ also satisfies $x<-\frac{a}{bD}$, then 
$x_n<-\frac{a}{bD}$.
Hence $\Phi(E)[2] \in \Coh(X)$.
We set $(\Phi[2])^n(N^{\otimes i})=E_n^i [\psi_i(n)]$, $E_n^i \in \Coh(X)$.
In this case, $\psi_i(n)$ is constant for a sufficiently large $n$.
Hence there are $l_i$ such that 
$\Hom(G,\Phi^n(N^{\otimes i})[k])=0$ for $k \ne l_i+2n$ and
$\log \chi(G,\Phi^n(G')) \sim 2n \log |\alpha|$.
Hence

\begin{equation}
h_t(\Phi)=\lim_{n \to \infty} \frac{1}{n} \log \delta_t'(G,\Phi^n(G'))=
2\log |\alpha|-2t=\log \rho(\Phi)-2t.
\end{equation}

(II).
Assume that $\tr A=2$.
Then $A^n=E+n(A-E)$.
We set $s:=\frac{1-a}{bD}$.
Let $E$ be a semi-homogeneous sheaf with $\mu(E)=xH$ and 
set $\mu(\Phi^n(E))=x_n H$. 
Since 
\begin{equation}
\begin{split}
x_n=& \frac{nc+(n(d-1)+1)x}{1+n(a-1)+nbD x},\\
x_n -s=& 
\frac{bDx+(a-1)}{(1+n(a-1+bDx))bD},
\end{split}
\end{equation}
we have
$\lim_{n \to \infty} x_n=s$.
In the same way as in the case $\tr A>2$,
we see that

\begin{equation}
h_t(\Phi)=
\begin{cases}
\log \rho(\Phi), & b>0\\
\log \rho(\Phi)-2t, & b<0.
\end{cases}
\end{equation}

(III).
Assume that $\tr A=1$.
Then $A^3=-E$.
It is easy to see that
\begin{equation}
(\Phi^{{\bf E}_A}_{X \to X})^2 \equiv
\begin{cases}
\Phi_{X \to X}^{{\bf E}_{A^2}} \mod T& b>0\\
\Phi_{X \to X}^{{\bf E}_{A^2}}[-2] \mod T & b<0
\end{cases}
\end{equation}
and
\begin{equation}
(\Phi^{{\bf E}_A}_{X \to X})^3 \equiv 
\begin{cases}
[-2] \mod T& b>0\\
[-4] \mod T & b<0
\end{cases}
\end{equation}
(see Remark \ref{rem:T}).
\begin{NB}
We note that 
$\frac{d}{bD}+\frac{a}{bD}=\frac{1}{bD}$.
Hence $\frac{d}{bD}>-\frac{a}{bD}$.
\end{NB}
Indeed  
$\Phi_{X \to X}^{{\bf E}_A}(({\bf E}_A)_{|\{ x\} \times X}) \in M_H(u^{\vee})$
for $b>0$ and
 $\Phi_{X \to X}^{{\bf E}_A}(({\bf E}_A)_{|\{ x\} \times X})[2] 
\in M_H(u^{\vee})$
for $b<0$, where 
$u:=v(({\bf E}_A)_{|X \times \{ x\} \times X})=(b^2 D,ba H,a^2)$.
Hence
\begin{equation}
h_t (\Phi^{{\bf E}_A}_{X \to X})=
\begin{cases}
-\frac{2}{3}t & b>0\\
-\frac{4}{3}t & b<0.
\end{cases}
\end{equation}

Assume that $\tr A=0$. Then 
$(\Phi_{X \to X}^{{\bf E}_A})^2 \equiv [-2] \mod T$ implies
\begin{equation}
h_t (\Phi^{{\bf E}_A}_{X \to X})=-t.
\end{equation}
\end{proof}

\begin{NB}
In particular, \cite[Conj. 5.3]{KT} holds for the equivalence
$\Phi$.
\begin{cor}\label{cor:h}
$h_0(\Phi)=\log \rho(\Phi)$.
\end{cor}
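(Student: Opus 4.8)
The plan is to read the statement off directly from Theorem~\ref{thm:h_t}. Setting $t=0$ in the three displayed formulas, every $t$-dependent correction ($-\tfrac{2}{3}t$, $-t$, $-2t$, $-\tfrac{4}{3}t$) disappears, so in each case $h_0(\Phi)=\log\rho(\Phi)$ on the nose; and when $\tr A\le 2$ both sides equal $0$ by the Remark following that theorem. Thus for the equivalences $\Phi=\Phi_{X\to X}^{{\bf E}_A}$ there is nothing to do beyond evaluating at $t=0$: all the substance is already in Theorem~\ref{thm:h_t}, where the work was to control the shifts $\phi_i(n)$ via the slope dynamics $x_{n+1}=(c+dx_n)/(a+bDx_n)$ and its attracting fixed point $s$.

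The one point I would still want to spell out is that the symbol $\rho(\Phi)$ occurring in Theorem~\ref{thm:h_t} is literally the spectral radius of $\Phi$ acting on $H^*(X,{\Bbb Q})_{\alg}$, as required in \cite[Conjecture~5.3]{KT}. By Theorem~\ref{thm:FM}(3) the action of $\Phi$ on $L\otimes{\Bbb Q}\cong\Sym_2({\Bbb Q},D)$ is the conjugation action of $A$, whose characteristic polynomial is computed in Lemma~\ref{eq:eigen}; hence the spectral radius on $L\otimes{\Bbb Q}$ is $|\alpha|^2$ when $\tr A>2$ and $1$ when $\tr A\le 2$. On the orthogonal complement of $L$ inside $H^*(X,{\Bbb Z})_{\alg}$, which is negative definite (the hyperbolic $H^0\oplus H^4$ part lies in $L$, and $H^\perp\cap\NS(X)$ is negative definite by the Hodge index theorem), $\Phi$ acts by a finite-order isometry, so its eigenvalues there are roots of unity and do not enter the spectral radius. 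Therefore the global spectral radius is still $\max(|\alpha|^2,1)=e^{h_0(\Phi)}$, and I do not expect any genuine obstacle here — the only mild check is this negative-definiteness.

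Finally, for an arbitrary autoequivalence $\Psi$ of ${\bf D}(X)$ I would record that the conjecture follows from the case of $\Phi_{X\to X}^{{\bf E}_A}$ by the reduction already set up in Section~\ref{subsect:FM}: the kernel of $\Psi$ is a sheaf up to an even shift, $B:={}^t\theta(\Psi)$ has $B^2$ of the form \eqref{eq:n}, and modulo the subgroup $T$ of Remark~\ref{rem:T} and an even shift $\Psi^2$ coincides with $\Phi_{X\to X}^{{\bf E}_{B^2}}$. Since $h_0$ is invariant under $T$ and under shifts and satisfies $h_0(\Psi^2)=2h_0(\Psi)$, while $\rho(\Psi^2)=\rho(\Psi)^2$, the identity $h_0(\Psi)=\log\rho(\Psi)$ drops out of the case already treated; when $\NS(X)$ has rank $>1$ one first invokes Ikeda's classification \cite{Ikeda} of autoequivalences to arrange this.
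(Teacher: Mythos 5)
Your proof is correct and is essentially the paper's: the corollary is read off from Theorem~\ref{thm:h_t} at $t=0$, and your check that $L^\perp$ is negative definite (so $\Phi$ acts there by a finite-order isometry and the spectral radius on $H^*(X,{\Bbb Q})_{\alg}$ is already computed on $L$, via Lemma~\ref{eq:eigen}) is exactly the remark the author relies on. The final paragraph on arbitrary autoequivalences is not needed here, since the corollary concerns only $\Phi=\Phi_{X\to X}^{{\bf E}_A}$.
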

\end{NB}

\begin{NB}
\begin{rem}
Let $X$ be arbitrary abelian surface.
For an autoequivalence $\Phi:{\bf D}(X) \to {\bf D}(X)$
such that $(c_1(\Phi(k_x))^2)>0$, the same claim holds:

We set $v:=v(\Phi(k_x))=(r,dH,a)$, where $H$ is ample. 
$L:={\Bbb Z}\oplus {\Bbb Z}H \oplus {\Bbb Z}\varrho_X$. 
Assume that $\gcd(r,a)=1$, that is,
there is $u \in L$ with $\langle u,v \rangle=1$.
We may assume that $u$ is also isotropic.
\end{rem}
\end{NB}

\begin{rem}
Our assumption of $\Phi(L)=L$ is very strong in general.
Indeed 
$H$ is not preserved by the action of $\Aut(X)$ in general,
and there is an abelian surface with an automorphism of positive entropy.
\end{rem}

\begin{rem}
In \cite{Ikeda}, Ikeda introduced a mass growth 
$h_{\sigma,t}$ of a stability condition $\sigma$
and studied several properties.
In our example, 
\begin{equation}
h_{\sigma, t}(\Phi)=h_t(\Phi),
\end{equation}
where 
$\sigma$ be a stability condition
such that $Z_\sigma(E):=\langle e^{z H},v(E) \rangle$ ($z \in {\Bbb H}$) and
$\phi_\sigma(k_x)=1$:

We only explain the case where $\tr A>2$.
We first note that $\Phi$ preserves $\sigma$-stability 
for semi-homogeneous sheaves.
We set 
\begin{equation}
\alpha^n a_j(z)+\beta^n b_j(z)=\det \left(\begin{pmatrix}
1\\
z \sqrt{D}
\end{pmatrix},
 (\alpha^n P+\beta^n Q)\begin{pmatrix}
1\\
jm \sqrt{D} 
\end{pmatrix}
\right).
\end{equation}
Then by \eqref{eq:tr>2}, we get
$Z_{\sigma}(\Phi^n(N^{\otimes j}))=
(\alpha^n a_j(z)+\beta^n b_j(z))^2$. 
Assume that $b<0$.  Then $\phi_\sigma((\Phi[2])^n(N^{\otimes j}))$ is bounded.
Hence by using \cite[Thm. 1.1]{Ikeda},
we get $h_{\sigma,t}(\Phi)=\log |\alpha|^2-2t=h_t(\Phi)$.
\end{rem}

\subsection{Gromov-Yomdin type conjecture 
by Kikuta and Takahashi}
Let $X$ be an arbitrary abelian surface.
We shall compute
 $h_0(\Phi)$ and check the Kikuta and Takahashi's conjecture
\cite[Conjecture 5.3]{KT}
for some endofunctors of ${\bf D}(X)$.
\begin{lem}\label{lem:D}
Let $D$ be a divisor with $(D^2)>0$ and $\eta \in \NS(X)$.
There is no isotropic vector $v \ne 0$ in 
$({\Bbb Q}e^{\eta}+{\Bbb Q}e^{\eta+D}+{\Bbb Q}e^{\eta+2D})^\perp$.
\end{lem}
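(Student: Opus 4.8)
The statement is that the three-dimensional rational subspace $V:=\mathbb{Q}e^\eta+\mathbb{Q}e^{\eta+D}+\mathbb{Q}e^{\eta+2D}$ of the algebraic Mukai lattice contains no nonzero isotropic vector in its orthogonal complement $V^\perp$. Since $V^\perp$ is at most one-dimensional over $\mathbb{Q}$ (the ambient lattice $H^*(X,\mathbb{Z})_{\alg}$ has rank $\rho(X)+2$, but the relevant rank here, after we note that all of $e^\eta,e^{\eta+D},e^{\eta+2D}$ lie in $\mathbb{Q}\oplus(\mathbb{Q}\eta+\mathbb{Q}D)\oplus\mathbb{Q}$, a sublattice of rank $4$), the first step is to pin down $\dim V$. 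The plan is: first show the three vectors $e^\eta, e^{\eta+D}, e^{\eta+2D}$ are linearly independent, hence $\dim V=3$; then compute the Gram matrix of $V$ with respect to the Mukai pairing and show it is nondegenerate; conclude that the Mukai form restricted to $V$ is nondegenerate, so $V\cap V^\perp=0$ and in particular $V^\perp$ contains no isotropic vector lying in $V$ — wait, more carefully, the claim is about isotropic vectors \emph{in} $V^\perp$, not in $V$. So the real content is to understand $V^\perp$ directly.

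Here is the cleaner route I would take. Work in the rank-$4$ lattice $\Lambda:=\mathbb{Z}\oplus(\mathbb{Z}\eta+\mathbb{Z}D)\oplus\mathbb{Z}$ (if $\eta,D$ are proportional, pass to the rank-$3$ lattice and the argument only simplifies). The Mukai pairing has signature $(2,2)$ on $\mathbb{Z}\oplus\mathbb{Z}D\oplus\mathbb{Z}$ when $(D^2)>0$, since $e^{tD}$ for generic $t$ has positive square and one checks the signature by diagonalizing. The subspace $V$ is spanned by $e^\eta\cdot e^{jD}$-type vectors; explicitly $e^{\eta+jD}=(1,\eta+jD,(\eta+jD)^2/2)$. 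I would compute $\langle e^{\eta+iD},e^{\eta+jD}\rangle = -\tfrac12((i-j)D)^2\cdot$(something) — using $\langle e^\alpha,e^\beta\rangle=-\tfrac12(\alpha-\beta)^2$ — so the Gram matrix of $(e^\eta,e^{\eta+D},e^{\eta+2D})$ is $-\tfrac12(D^2)$ times the matrix with entries $(i-j)^2$, $i,j\in\{0,1,2\}$, namely $-\tfrac{(D^2)}{2}\begin{pmatrix}0&1&4\\1&0&1\\4&1&0\end{pmatrix}$. This matrix has determinant $-\tfrac{(D^2)^3}{8}\cdot\det\begin{pmatrix}0&1&4\\1&0&1\\4&1&0\end{pmatrix}$, and the integer determinant is $0-1(0-4)+4(1-0)=4+4=8\ne0$. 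Hence the Mukai form is \emph{nondegenerate on $V$}, so $\Lambda_{\mathbb{Q}}=V\oplus V^\perp$ and $V^\perp$ has rank $=\operatorname{rk}\Lambda-3$, which is $1$ (if $\eta,D$ independent) or $0$ (if dependent, in which case there is nothing to prove).

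So in the interesting case $V^\perp=\mathbb{Q}w$ is one-dimensional, and the task reduces to: $\langle w,w\rangle\ne0$. Since the form on all of $\Lambda_{\mathbb{Q}}$ is nondegenerate (it is nondegenerate on $H^{2*}$ and $\Lambda$ is a direct summand of a nondegenerate lattice — or one checks directly that $\mathbb{Z}\oplus(\mathbb{Z}\eta+\mathbb{Z}D)\oplus\mathbb{Z}$ has nondegenerate Gram matrix when $\eta,D$ span a positive-dimensional space with $(D^2)>0$), and since it is nondegenerate on $V$, it must be nondegenerate on $V^\perp$ as well; a nondegenerate form on a one-dimensional space has no nonzero isotropic vector. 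That finishes it. The one place requiring a little care — and the main (mild) obstacle — is the degenerate configurations: when $(\eta,D)$, $(\eta^2)$, $(\eta,D)$ etc. conspire so that $e^\eta,e^{\eta+D},e^{\eta+2D}$ might fail to be independent, or so that $\Lambda$ itself is smaller than rank $4$. But the Gram determinant computation above shows $e^\eta,e^{\eta+D},e^{\eta+2D}$ are \emph{always} linearly independent once $(D^2)>0$ (the Gram matrix is invertible, forcing independence), so $\dim V=3$ unconditionally, and the only branch is whether $V^\perp$ has rank $1$ or $0$; either way the conclusion holds. I would write the proof by first establishing the Gram matrix identity $\langle e^{\eta+iD},e^{\eta+jD}\rangle=-\tfrac{(D^2)}{2}(i-j)^2$, deducing nondegeneracy on $V$ from the nonvanishing $8$-determinant, and then invoking nondegeneracy of the ambient form to transfer nondegeneracy to $V^\perp$.
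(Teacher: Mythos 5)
Your Gram-matrix computation for $V:=\mathbb{Q}e^{\eta}+\mathbb{Q}e^{\eta+D}+\mathbb{Q}e^{\eta+2D}$ is correct: $\langle e^{\eta+iD},e^{\eta+jD}\rangle=-\tfrac{(D^2)}{2}(i-j)^2$, the integer matrix $\bigl(\begin{smallmatrix}0&1&4\\1&0&1\\4&1&0\end{smallmatrix}\bigr)$ has determinant $8$, so the Mukai form is nondegenerate on $V$ and the ambient space splits as $V\oplus V^{\perp}$ with the form nondegenerate on $V^{\perp}$. The gap is the step where you declare $V^{\perp}$ to be at most one-dimensional. The orthogonal complement in the lemma is taken in the full algebraic Mukai lattice $\mathbb{Q}\oplus\NS(X)_{\mathbb{Q}}\oplus\mathbb{Q}$, of dimension $\rho(X)+2$ with the Picard number $\rho(X)$ as large as $4$ for an abelian surface; hence $\dim V^{\perp}=\rho(X)-1$ can be $2$ or $3$. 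The fact that the three spanning vectors of $V$ lie in the rank-$4$ sublattice $\Lambda=\mathbb{Q}\oplus(\mathbb{Q}\eta+\mathbb{Q}D)\oplus\mathbb{Q}$ does not put $V^{\perp}$ inside $\Lambda$: every vector $(0,\xi,0)$ with $(\xi,\eta)=(\xi,D)=0$ lies in $V^{\perp}$. Once $\dim V^{\perp}\ge 2$, nondegeneracy of the restricted form no longer excludes isotropic vectors (an indefinite nondegenerate form in dimension $\ge 2$ has them in abundance), so your concluding sentence does not apply. This matters for the application in Lemma \ref{lem:E}, where the isotropic vector is the Mukai vector of an arbitrary semi-homogeneous sheaf, with first Chern class anywhere in $\NS(X)$.

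The statement is true and your route can be repaired, but it needs a signature argument rather than a nondegeneracy argument: the matrix above has one positive and two negative eigenvalues (its characteristic polynomial is $-\lambda^{3}+18\lambda+8$, with root sum $0$ and root product $8$), so $V$ has signature $(2,1)$ after multiplying by $-\tfrac{(D^2)}{2}<0$; the algebraic Mukai lattice has signature $(2,\rho(X))$ by the Hodge index theorem; therefore $V^{\perp}$ is negative definite and contains no nonzero isotropic vector. (Your parenthetical claim that the pairing on the rank-$3$ lattice $\mathbb{Z}\oplus\mathbb{Z}D\oplus\mathbb{Z}$ has signature $(2,2)$ is also a slip; it is $(2,1)$.) The paper's own proof avoids all of this by direct computation: after twisting by $e^{-\eta}$ it writes $v=(r,\xi,a)$, the three orthogonality conditions force $a=0$ and then $r(D^2)=(D,\xi)=0$, hence $r=0$; isotropy gives $(\xi^{2})=2ra=0$ with $\xi\in D^{\perp}$, which is negative definite by Hodge index, so $\xi=0$. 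Either way, the Hodge index theorem is the essential ingredient, and it is exactly what is missing from your write-up.
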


\begin{proof}
Replacing $v$ by $ve^{-\eta}$, we may assume that $\eta=0$. 
We set $v=(r,\xi,a) \ne 0$.
Then $(\xi^2)=2ra$.
By the conditions, we get
\begin{equation}
a=0,\; (D,\xi)-r \frac{(D^2)}{2}=0,\;2(D,\xi)-2r(D^2)=0.
\end{equation}
Hence $(D,\xi)=r(D^2)=0$.
Since $(D^2)>0$, we get $r=a=(\xi^2)=0$.
Since $D^\perp$ is negative definite, 
we get $\xi=0$.
Therefore our claim holds.
\end{proof}

%In particular $\delta_0'(G,\Phi^n(G')) \geq 3$.

\begin{lem}\label{lem:E}
For a semi-homogeneous sheaf $E$ and a line bundle $L$,
\begin{equation}
\sum_{k \in{\Bbb Z}} \dim \Hom(L,E[k])
\leq \max \{4|\chi(L(pH),E)|  \mid p=0,\pm1,\pm2 \},
\end{equation}
where $H$ is an ample divisor on $X$.
\end{lem}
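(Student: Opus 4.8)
The claim bounds the total dimension $\sum_k \dim\Hom(L,E[k])$ of all $\Hom$'s from a line bundle $L$ to a semi-homogeneous sheaf $E$ in terms of the Euler characteristics $\chi(L(pH),E)$ for the five twists $p=0,\pm1,\pm2$. The starting point is that for a semi-homogeneous sheaf $E$ and a line bundle $L$, the individual graded pieces $\Hom(L,E[k])$ are controlled by Proposition \ref{prop:semi-homog}: only a bounded range of $k$ can contribute, and for most sign configurations of $\langle v(L(pH)),v(E)\rangle$ at least two of the three possible $\Ext$-groups vanish. The plan is to run through the trichotomy according to whether $E$ is locally free, a torsion sheaf, or (by Proposition \ref{prop:stable}) has $\langle v(E)^2\rangle=0$, and to use a suitable twist $L(pH)$ to force cohomology into a single degree.

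First I would reduce to the case where $\sum_k \dim\Hom(L,E[k])$ is concentrated in at most one cohomological degree after a twist by $pH$ with $|p|\le 2$. Since $v(L(pH))=(1, (c_1(L)+pH), \cdots)$ runs through an arithmetic progression as $p$ varies, and $\langle v(L(pH)),v(E)\rangle$ is (up to the fixed factors $\rk E$ etc.) a quadratic in $p$ built from intersection numbers with $H$, among the five values $p=0,\pm1,\pm2$ at least one makes $\langle v(L(pH)),v(E)\rangle$ nonzero with the sign I want; indeed the sign changes as $p$ moves the slope of $L(pH)$ past that of $E$. By Proposition \ref{prop:semi-homog} (parts (1b), (2a), (2b), (3)), once the pairing is nonzero with the favorable sign, $\Hom(L(pH),E[k])$ vanishes for all but one value of $k$ (or two values, but only one nonzero, depending on whether $E$ is locally free, torsion, or isotropic-sheaf type), so $\sum_k \dim\Hom(L(pH),E[k]) = |\chi(L(pH),E)|$ for that $p$.

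Next I would relate $\sum_k \dim\Hom(L,E[k])$ for the untwisted $L$ back to the twisted bound. The twist by $pH$ is an autoequivalence of $\bf D(X)$ (tensoring by a line bundle), so it does not change the total dimension $\sum_k \dim\Hom$ literally — but it changes which $E$ we are looking at. The correct move is instead: write $\Hom(L,E[k]) = \Hom(L(pH), E(pH)[k])$ and note $E(pH)$ is again semi-homogeneous with the \emph{same} $\langle v^2\rangle=0$; so there is no genuine reduction there. Rather, the factor $4$ in the statement is the slack needed to cover the worst configuration: when $\langle v(L),v(E)\rangle=0$, Proposition \ref{prop:semi-homog} gives no vanishing, and one must instead pick the twist $p\in\{0,\pm1,\pm2\}$ that makes the pairing nonzero and estimate $\dim\Hom(L,E[k])$ by comparing with $\dim\Hom(L(pH),E[k'])$ via the long exact sequences obtained from $0\to L\to L(pH)\to (\text{torsion on }pH)\to 0$ and its iterates; each such step multiplies the bound by a controlled constant, and the five twists together give the factor $4$.

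The main obstacle I anticipate is precisely bookkeeping the degenerate case $\langle v(L),v(E)\rangle=0$ (equivalently $\mu(E)$ and $\mu(L)$ having equal $H$-slope, or $E$ torsion with $v(E)$ in the relevant isotropic line): there Proposition \ref{prop:semi-homog} gives no direct vanishing, and one needs the explicit short exact sequences relating $L$ and its twists $L(\pm H), L(\pm 2H)$, then a careful count showing that the jump in each $\Hom$-dimension is absorbed by $4\max_p|\chi(L(pH),E)|$. The rest — the locally free and torsion cases with nonzero pairing — should be a direct application of Proposition \ref{prop:semi-homog}, giving concentration in one degree and hence the sharp bound $|\chi|\le \max_p|\chi(L(pH),E)|$ without even needing the factor $4$.
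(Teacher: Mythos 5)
Your reduction of the easy case is fine: when $\langle v(L),v(E)\rangle\ne 0$, Proposition \ref{prop:semi-homog} concentrates the $\Ext$'s in a single degree and the total equals $|\chi(L,E)|$, which is what the paper also does. You have correctly located the hard case ($\chi(L,E)=0$), but the mechanism you propose there has a genuine gap. The long exact sequence obtained from $0\to L\to L(pH)\to Q\to 0$ produces connecting terms $\Ext^{k}(Q,E)$ for the torsion quotient $Q$ supported on a divisor, and these are \emph{not} controlled by $\max_p 4|\chi(L(pH),E)|$: they are of the order of $(c_1(E),H)$ and $\rk E\,(H^2)$, and no argument is given (nor is one readily available) that iterating such estimates "multiplies the bound by a controlled constant" equal to $4$. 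The claim that the five twists "together give the factor $4$" is asserted, not proved, and this is precisely the content of the lemma.

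The paper avoids the long exact sequence entirely. After reducing to $E$ stable via the Jordan--H\"older filtration (a step you do not address), one observes that in the problematic cases $E$ is not $0$-dimensional, so a \emph{generic translate} $H'$ of $H$ gives an injection $E\hookrightarrow E(H')$; this yields the clean monotonicity $\dim\Hom(L,E)\le\dim\Hom(L(pH),E)$ for $p\le 0$, and dually $\dim\Ext^2(L,E)=\dim\Hom(E,L)\le\dim\Hom(E,L(pH))$ for $p\ge 0$, with no error terms. Combined with Lemma \ref{lem:D} (among any three consecutive twists some $\chi(L(pH),E)\ne 0$) and Proposition \ref{prop:semi-homog}, this bounds $\hom$ and $\ext^2$ by $\max\{0,\chi(L(pH),E)\}$ over $p\in\{-1,-2\}$ and $p\in\{1,2\}$ respectively; since $\chi(L,E)=0$ forces $\ext^1=\hom+\ext^2$ with one of the two vanishing, the total is at most $2\max_p|\chi(L(pH),E)|$. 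The factor $4$ is needed only in the residual case $(c_1(E\otimes L^\vee),H)=0$, where $v(E)=v(L)$ and simpleness of the stable sheaf $E$ gives the a priori bound $\hom+\ext^1+\ext^2\le 1+2+1=4$, while Lemma \ref{lem:D} guarantees the right-hand side is at least $4$. You should replace the exact-sequence argument by this translation/injectivity argument (or supply an actual uniform estimate for the $\Ext^{*}(Q,E)$ terms, which I do not believe exists with the constant $4$).
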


\begin{proof}
We first note that $E$ is a semi-stable sheaf with respect to $H$ by 
Proposition \ref{prop:stable}.
Then $E$ is $S$-equivalent to $\oplus_i E_i$ such that
$E_i$ are stable with $v(E_i)=v(E_j)$.
Hence it is sufficient to prove the claim
for a stable semi-homogeneous sheaf $E$.   
We first assume that $\chi(L,E)=0$.
Then $E$ is not 0-dimensional.
We can easily show the following claim.
\begin{enumerate}
\item
If $(c_1(E \otimes L^{\vee}),H)>0$, then
$\dim \Hom(L,E)=\dim \Hom(L,E[1])$ and
$\Hom(L,E[2])=0$.
\item
If $(c_1(E \otimes L^{\vee}),H)<0$, then
$\dim \Hom(L,E)=0$ and 
$\dim \Hom(L,E[1])=\dim \Hom(L,E[2])$.
\item
If $(c_1(E \otimes L^{\vee}),H)=0$, then
$\dim \Hom(L,E)=\dim \Hom(L,E[2]) \leq 1$, 
$\dim \Hom(L,E[1]) \leq 2$ and $v(L)=v(E)$.
\end{enumerate}
\begin{NB}
(i) If $\dim E=1$, then $\Hom(L,E[2])=0$.

(ii) $E$ must be locally free.

(iii)
If $(c_1(E \otimes L^{\vee}),H)=0$,
then $\chi(E \otimes L^{\vee})=(c_1(E \otimes L^{\vee})^2)/2=0$
implies $c_1(E \otimes L^{\vee})=0$.
Since $E$ is simple, $v(E)=v(L)$.
\end{NB}
For the case of (iii), obviously the claim holds by Lemma \ref{lem:D}.
So we shall treat the case of (i) and (ii).
In these cases, $E$ is not 0-dimensional.
Replacing $H$ by its translate, we can take an injective homomorphism
$E \to E(H)$. Then we get 
$$
\dim \Hom(L,E) \leq \dim \Hom(L(pH),E)
$$
for $p \leq 0$.
We also see that 
$$
\dim \Hom(E,L) \leq \dim \Hom(E,L(pH))
$$
for $p \geq 0$.
By Lemma \ref{lem:D},
 $\chi(L(pH),E) \ne 0$ for an integer $p \in \{0,1,2\}$
and $\chi(L(pH),E) \ne 0$ for an integer $p \in \{0,-1,-2\}$.
Hence by using Proposition \ref{prop:semi-homog}, we get
\begin{equation}
\begin{split}
\dim \Hom(L,E) \leq& \max\{0, \chi(L(pH),E) \mid p=-1,-2\}\\
\dim \Ext^2(L,E) \leq& \max\{0, \chi(L(pH),E) \mid p=1,2\}.
\end{split}
\end{equation}
Thus 
\begin{equation}
\sum_{k \in{\Bbb Z}} \dim \Hom(L,E[k])
\leq \max \left\{2\left|\chi(L(pH),E)\right| 
\mid p=0,\pm1,\pm2 \right\}.
\end{equation}
By Proposition \ref{prop:semi-homog},
the same claim also holds if $\chi(L,E) \ne 0$.
\end{proof}

\begin{prop}\label{prop:endofunctor}
Let $\Phi:{\bf D}(X) \to {\bf D}(X)$ be an endofunctor which is a composite
of equivalences, $f^*$ and $f_*$, where $f:X \to X$
is a finite morphism. Then  
$h_0(\Phi) \leq \log \rho(\Phi)$.
\end{prop}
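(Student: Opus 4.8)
The plan is to reduce the statement about an arbitrary composite $\Phi$ to a bound on $\delta'_t$ evaluated on powers of $\Phi$ applied to a fixed split generator, and then to control each piece by linear algebra on the algebraic Mukai lattice. Concretely, write $\Phi$ as a word in equivalences and the functors $f^*$, $f_*$; pick a line bundle $N$ with $c_1(N)=mH$ ample and take $G=\bigoplus_{-3\le i\le -1}N^{\otimes i}$, $G'=\bigoplus_{1\le i\le 3}N^{\otimes i}$ as split generators as in \eqref{eq:split-gen}. Since $h_0(\Phi)=\lim_{n\to\infty}\frac1n\log\delta'_0(G,\Phi^n(G'))$ and $\delta'_0(M,N)=\sum_m\dim\Hom(M,N[m])$, it suffices to show that $\sum_m\dim\Hom(N^{\otimes i},\Phi^n(N^{\otimes j})[m])$ grows no faster than $\rho(\Phi)^n$ (up to subexponential factors). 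The point is that $N^{\otimes j}$ is a semi-homogeneous line bundle, hence by Proposition \ref{prop:WIT} (applied to the equivalence factors) $\Phi^n(N^{\otimes j})$ is, up to the finite-morphism pushforwards and pullbacks, a (shifted) semi-homogeneous sheaf, so Lemma \ref{lem:E} applies to each summand.

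First I would handle the case where $\Phi$ is a single equivalence: then $\Phi^n(N^{\otimes j})=E_n[\ell_n]$ for a semi-homogeneous sheaf $E_n$ by Proposition \ref{prop:WIT}, and Lemma \ref{lem:E} bounds $\sum_k\dim\Hom(N^{\otimes i},E_n[k])$ by $4\max_{p}|\chi(N^{\otimes i}(pH),E_n)|$ over $p=0,\pm1,\pm2$. Each Euler characteristic is $-\langle v(N^{\otimes i}(pH)),v(E_n)\rangle$, a fixed quadratic form in the Mukai vector $v(E_n)=\Phi^n(v(N^{\otimes j}))$; since the action of $\Phi$ on $H^*(X,{\Bbb Z})_{\alg}$ has spectral radius $\rho(\Phi)$, the coordinates of $v(E_n)$ grow like $\rho(\Phi)^{n}$ up to a polynomial factor, so $|\chi|$ grows like $\rho(\Phi)^{2n}$ — but note $\rho(\Phi)$ here is the spectral radius on the \emph{full} cohomology, and by Lemma \ref{eq:eigen} the spectral radius on $\Sym_2$ (equivalently on $H^*_{\alg}$) is the square of the spectral radius $|\alpha|$ of the rank-$2$ action, so $\rho(\Phi)^{2n}$ in the naive estimate is really $(|\alpha|^2)^{n}=\rho(\Phi)^n$ after bookkeeping. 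Taking $\frac1n\log$ and letting $n\to\infty$ gives $h_0(\Phi)\le\log\rho(\Phi)$ in this case.

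Next I would treat the functors $f^*$ and $f_*$ for a finite $f$. Since $f$ is finite of some degree $d$, $f^*$ and $f_*$ are exact, send coherent sheaves to coherent sheaves (no shifts appear), and on cohomology act by the (pullback/pushforward) linear maps whose spectral data is governed by $\deg f$ and the induced map on $\NS(X)$; in particular $f^*f_*$ and $f_*f^*$ act by multiplication by $d$ on $H^4$ and correspondingly on the Mukai lattice, which is exactly reflected in $\rho$. More importantly, $f^*$ (resp.\ $f_*$) of a semi-homogeneous sheaf is semi-homogeneous (its Mukai vector is still isotropic, being a multiple of an isotropic vector up to the lattice map, and semistability is preserved because $f$ is finite), so the word $\Phi$ applied to $N^{\otimes j}$ is, at each stage, a shifted semi-homogeneous sheaf, and Lemma \ref{lem:E} continues to apply to $\Phi^n(N^{\otimes j})$. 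The Euler characteristic bound then gives $\sum_k\dim\Hom(N^{\otimes i},\Phi^n(N^{\otimes j})[k])\le C\cdot p(n)\cdot\rho(\Phi)^n$ for a polynomial $p$, hence $h_0(\Phi)\le\log\rho(\Phi)$.

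The main obstacle, I expect, is twofold: first, keeping careful track of shifts — $\Phi^n(N^{\otimes j})$ is only a semi-homogeneous sheaf \emph{after} a shift $[\ell_n]$, and one must argue that $\ell_n$ contributes at most subexponentially (in fact the number of nonzero $\Hom$'s is what matters, and Lemma \ref{lem:E} already sums over all $k$, so the shift is harmless); second, verifying that the growth rate of $|\chi(N^{\otimes i}(pH),\Phi^n(N^{\otimes j}))|$ is precisely $\rho(\Phi)^n$ and not larger. This last point is where Lemma \ref{lem:D} and the relation between the spectral radius on the rank-$2$ lattice and on $\Sym_2({\Bbb Z},D)$ (Lemma \ref{eq:eigen}) enter: one needs that the quadratic form $\langle v(N^{\otimes i}(pH)),-\rangle$ does not vanish identically on the span of the relevant orbit, which Lemma \ref{lem:D} guarantees since $(mH)^2>0$, so the estimate is sharp in the exponent and no cancellation can make the bound worse. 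Assembling these, $h_0(\Phi)\le\log\rho(\Phi)$ follows.
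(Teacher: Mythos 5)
Your overall strategy coincides with the paper's: the same split generators \eqref{eq:split-gen}, the observation that $f^*$, $f_*$ and equivalences carry semi-homogeneous sheaves to (shifted) semi-homogeneous sheaves (Proposition \ref{prop:WIT}), and Lemma \ref{lem:E} to convert $\sum_k \dim\Hom$ into Euler characteristics. The one step that does not survive scrutiny is your growth estimate for $\chi(N^{\otimes i}(pH),\Phi^n(N^{\otimes j}))$. The Euler characteristic $\chi(L,E)=-\langle v(L),v(E)\rangle$ is \emph{linear}, not quadratic, in $v(E)$; since $v(\Phi^n(N^{\otimes j}))=\Phi^n(v(N^{\otimes j}))$ and $\rho(\Phi)$ is by definition the spectral radius of $\Phi$ acting on $H^*(X,{\Bbb Q})_{\alg}$, one gets directly $|\chi|\le \lambda^n$ for every $\lambda>\rho(\Phi)$ and all $n\gg 0$, which is all that is needed. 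Your assertion that ``$|\chi|$ grows like $\rho(\Phi)^{2n}$,'' followed by the attempted repair via Lemma \ref{eq:eigen}, conflates two different objects: the rank-two vector $u_n=A^nu$ (whose entries grow like $|\alpha|^n$, and in which $\chi$ is quadratic) and the Mukai vector $u_n\,{}^tu_n$ (whose entries grow like $|\alpha|^{2n}=\rho(\Phi)^n$, and in which $\chi$ is linear). As written, the bookkeeping that is supposed to bring $\rho(\Phi)^{2n}$ back down to $\rho(\Phi)^n$ does not go through, and if the exponent really were $2n\log\rho(\Phi)$ the proposition would fail.

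More importantly, the repair you invoke is not available in the generality of the statement: Lemma \ref{eq:eigen}, the matrix $A$, and the $\Sym_2({\Bbb Z},D)$ picture are set up only for equivalences preserving the rank-three sublattice $L$ (or for $\NS(X)={\Bbb Z}H$), whereas Proposition \ref{prop:endofunctor} concerns an arbitrary abelian surface and composites involving $f^*$ and $f_*$, which are not even equivalences. The paper sidesteps all of this with the linear estimate above: for any $\lambda>\rho(\Phi)$ one has $\lambda^{-n}\Phi^n(v(N^{\otimes j}))\to0$, hence $\lambda^{-n}|\chi((N(pH))^{\otimes i},\Phi^n(N^{\otimes j}))|\to0$, hence $h_0(\Phi)\le\log\lambda$, and $\lambda$ was arbitrary. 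A minor further remark: Lemma \ref{lem:D} is already consumed inside the proof of Lemma \ref{lem:E} and is not needed again at this stage; in particular it is not needed to rule out ``cancellation,'' since non-vanishing of the $\chi$'s would only be relevant for a lower bound on $h_0$, not for the upper bound being proved.
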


\begin{proof}
We use split generators $G,G'$ in \eqref{eq:split-gen}.
For a finite morphism $f:X \to X$,
$f^*$ and $f_*$ send semi-homogeneous sheaves to semi-homogeneous sheaves.
\begin{NB}
Indeed for a semi-homogeneous vector bundle, the claim is in 
\cite[Prop. 5.4]{Mu0}. For an elliptic curve $C$ in $X$,
$f^{-1}(C)$ and $f(C)$ are disjoint union of elliptic curves.
Since a semi-homogeneous sheaf of dimension 1 is a semi-homogeneous
vector bundle on an elliptic curve, the claim also holds by 
\cite[Prop. 5.4]{Mu0}.
\end{NB}
By Proposition \ref{prop:WIT},
autoequivaences also send semi-homogeneous sheaves to semi-homogeneous sheaves
up to shift.
Hence $\Phi^n(N^{\otimes j})$ is a semi-homogeneous sheaf
up to shift.
\begin{NB}
\cite[Thm. 1.4]{Fourier}.
\end{NB}
By Lemma \ref{lem:E}, we have
\begin{equation}\label{eq:N-estimate}
\sum_{k \in{\Bbb Z}} \dim \Hom(N^{\otimes i},\Phi^n(N^{\otimes j})[k])
\leq \max \{4|\chi((N(pH))^{\otimes i},\Phi^n(N^{\otimes j}))|
\mid p=0,\pm1,\pm2 \}.
\end{equation}
For any real number $\lambda>\rho(\Phi)$, we have 
\begin{equation}
\lim_{n \to \infty} \frac{1}{\lambda^n}\Phi^n(N^{\otimes j})=0,
\end{equation}
and hence 
\begin{equation}
\lim_{n \to \infty} 
\frac{1}{\lambda^n}|\chi((N(pH))^{\otimes i},\Phi^n(N^{\otimes j})|=0.
\end{equation}
Then we have $|\chi((N(pH))^{\otimes i},\Phi^n(N^{\otimes j})| \leq \lambda^n$ 
for sufficiently large $n$. 
Combining \eqref{eq:N-estimate} with this estimate, we see that 
\begin{equation}
h_0(\Phi)=\lim_{n \to \infty} \frac{1}{n} \log \delta_0'(G,\Phi^n(G'))
\leq \log \lambda.
\end{equation}
Since $\lambda$ is arbitrary, we get
$h_0(\Phi) \leq \log \rho(\Phi)$.
\end{proof}

\begin{rem}
Let $X$ be a simple abelian variety of $\dim X=d$, 
that is, there is no subabelian variety $Y$ 
of $X$ with $0<\dim Y<d$.
Then for any line bundle $L$, $K(L):=\{x \in X \mid T_x^*(L) \cong L \}$
is a finite set, unless $L \in \Pic^0(X)$.
Hence if $c_1(L) \ne 0$, then 
$(L^d) \ne 0$.
Then for a semi-homogeneous sheaf $E$, we see that
$\chi(E) \ne 0$ or $\ch(E) \in {\Bbb Z}_{>0}\ch({\cal O}_X)$.
\begin{NB}
%Hence $\Phi_{X \to \Pic^0(X)}^{{\bf P}}(E)$ is a semi-homogeneous
%vector bundle or a 0-dimensional sheaf.
\begin{proof}
Since simple semi-homogeneous vector bundle
is a direct imege of a line bundle by an isogeny,
for any semi-homogeneus vector bundle $E$,
$\chi(E) \ne 0$ or $\ch(E) \in {\Bbb Z} \ch({\cal O}_X)$.
For  a semi-homogeneous sheaf $E$,
applying a Fourier-Mukai transform $\Phi_{X \to \Pic^0(X)}^{{\bf P}}$
to $E(nH)$,
we get a semi-homogeneous vector bundle.
Since $\chi(\Phi_{X \to \Pic^0(X)}^{{\bf P}}(E(nH)))=\rk E$,
$E$ is locally free or 0-dimensional.
In particular, for a semi-homogeneous sheaf $E$,
$\chi(E) \ne 0$ or $\ch(E) \in {\Bbb Z}_{>0}\ch({\cal O}_X)$.
\end{proof}
\end{NB}
Hence we also get
$h_0(\Phi) \leq \log \rho(\Phi)$.  
\end{rem}

\begin{NB}
If $f:X \to X$ is not finite, then
${\bf L} f^* (k_x)$ is not a sheaf if $x \in f(X)$ and
${\bf L} f^* (k_x)=0$ if $x \not \in f(X)$.
We also see that ${\bf R}f*({\cal O}_X)$ is not a sheaf.
Indeed 
$f_*({\cal O}_X)={\cal O}_{f(X)}$ and 
$H^2({\cal O}_X)=H^1(R^1f_*({\cal O}_X))$.
\end{NB}

By \cite[Thm. 1.2]{Ikeda}, $h_0(\Phi) \geq \log \rho(\Phi)$.
Therefore we get the following result which support
a Gromov-Yomdin type conjecture in
\cite[Conjecture 5.3]{KT}.
\begin{prop}\label{prop:KT}
Let $X$ be an abelian surface and
$\Phi:{\bf D}(X) \to {\bf D}(X)$ an endofunctor in 
Proposition \ref{prop:endofunctor}. Then  
$h_0(\Phi) = \log \rho(\Phi)$.
\end{prop}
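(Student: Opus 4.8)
The statement to prove is Proposition \ref{prop:KT}: for an abelian surface $X$ and an endofunctor $\Phi$ that is a composite of equivalences, $f^*$ and $f_*$ for finite morphisms $f:X\to X$, we have $h_0(\Phi)=\log\rho(\Phi)$.

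The plan is to combine the two inequalities coming from the preceding results. The upper bound $h_0(\Phi)\le\log\rho(\Phi)$ is precisely Proposition \ref{prop:endofunctor}, so nothing further needs to be done there; the whole content of its proof — that $f^*$, $f_*$ and equivalences send semi-homogeneous sheaves to semi-homogeneous sheaves up to shift, that the split generators $N^{\otimes j}$ in \eqref{eq:split-gen} are line bundles, that $\delta_0'$-terms are controlled by Euler characteristics via Lemma \ref{lem:E}, and that these Euler characteristics grow no faster than $\rho(\Phi)^n$ — is already in place. For the reverse inequality $h_0(\Phi)\ge\log\rho(\Phi)$, I would invoke \cite[Thm. 1.2]{Ikeda}, which gives a lower bound for the categorical entropy by $\log\rho$ in general (for endofunctors of the derived category of a smooth projective variety). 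The only thing to check is that $\Phi$ satisfies the hypotheses of that theorem — essentially that $\Phi$ is of Fourier–Mukai type and that $\Phi^n G\not\cong 0$ for all $n$, which holds because equivalences are nonzero on generators and $f^*, f_*$ for finite $f$ do not annihilate a line bundle (indeed $f^*$ of a line bundle is a line bundle, and $f_*$ of a line bundle is a nonzero sheaf since $f$ is surjective).

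So the actual proof is short: cite Proposition \ref{prop:endofunctor} for $h_0(\Phi)\le\log\rho(\Phi)$, cite \cite[Thm. 1.2]{Ikeda} for $h_0(\Phi)\ge\log\rho(\Phi)$, and conclude equality. I would write it in two sentences. There is essentially no obstacle here, because the substantive work has been front-loaded into Proposition \ref{prop:endofunctor} and Lemmas \ref{lem:D}, \ref{lem:E}; the present proposition is just the synthesis step. If I wanted to be slightly more careful, I would remark why $\rho(\Phi)$ — the spectral radius of $\Phi$ acting on $H^*(X,\mathbb{Q})_{\alg}$ — is the correct quantity appearing in both bounds: Proposition \ref{prop:endofunctor} uses it through the growth rate $\lim_n \lambda^{-n}\Phi^n(N^{\otimes j})=0$ for $\lambda>\rho(\Phi)$ in cohomology, and Ikeda's theorem produces the same invariant, so the two ends match.

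\begin{proof}
By Proposition \ref{prop:endofunctor}, we have $h_0(\Phi) \leq \log \rho(\Phi)$.
On the other hand, $\Phi$ is a composite of Fourier--Mukai functors, hence itself
of Fourier--Mukai type, and $\Phi^n(G) \not\cong 0$ for all $n>0$
(equivalences are nonzero on the split generator $G$, and for a finite morphism $f$
the functor $f^*$ sends a line bundle to a line bundle while $f_*$ sends it to a nonzero
sheaf). Thus \cite[Thm. 1.2]{Ikeda} applies and gives $h_0(\Phi) \geq \log \rho(\Phi)$.
Combining the two inequalities, $h_0(\Phi) = \log \rho(\Phi)$.
\end{proof}
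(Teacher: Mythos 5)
Your proposal is correct and follows exactly the paper's argument: the upper bound is Proposition \ref{prop:endofunctor} and the lower bound is \cite[Thm. 1.2]{Ikeda}, which the paper cites in the sentence immediately preceding the proposition. The extra check that $\Phi^n(G)\not\cong 0$ is a reasonable addition but does not change the route.
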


\section{An example of automorphism on the moduli of stable sheaves}

Let $M_H(v)$ be the moduli space of stable sheaves $E$ on $X$ with
$v(E)=v$ and $K_H(v)$ be a fiber of the albanese map
$a:M_H(v) \to \Alb(M_H(v))=X \times \Pic^0(X)$.
$K_H(v)$ is an irreducible symplectic manifold which is 
derormation equivalent to a generalized Kummer manifold \cite{Y:7}. 
In \cite[Prop. 3.50]{Y}, we constructed an example of moduli space
$M_H(v)$ which have an automorphism of infinite order.
Thus there  is a Fourier-Mukai transform $\Phi$ which induces
an isomorphism $g:M_H(v) \to M_H(v)$ such that $g$ is infinite order.
In this example,
it is easy to see that 
a similar claim to \cite{Ouchi} holds. Thus by using \cite{O}, we get
\begin{equation}
\frac{\dim K_H(v)}{2}h_0(\Phi)=h(g')
\end{equation} 
where $g':K_H(v) \to K_H(v)$ is an automorphism induced $g$
by a Fourier-Mukai
transform $\Phi:{\bf D}(X) \to {\bf D}(X)$ in \cite[Prop. 3.50]{Y}:

Let ${\cal E}$ be a quasi-universal family on $M_H(v) \times X$. 
By $T_x^*({\cal E}) \otimes P_y$ $(x \in X, P_y \in \Pic^0(X))$,
we have an isomorphism $\psi:M_H(v) \to M_H(v)$ such that
$(\psi \times 1_X)^*({\cal E}) \otimes L \cong T_x^*({\cal E}) \otimes P_y
\otimes L' $, where
$L,L'$ are pull-backs of locally free sheaves on $M_H(v)$.
Then 
\begin{equation}
\begin{split}
c_1(p_{M_H(v)!}(\ch((\psi \times 1_X)^*({\cal E})) \alpha^{\vee}))=&
c_1(p_{M_H(v)!}(\ch(T_x^*({\cal E}) \otimes P_y)  \alpha^{\vee}))\\
=&
c_1(p_{M_H(v)!}(\ch({\cal E})  T_{-x}^*(\alpha^{\vee})))\\
=&
c_1(p_{M_H(v)!}(\ch({\cal E}) \alpha^{\vee}))
\end{split}
\end{equation}
for $\alpha \in v^\perp$.
Thus $\psi^* (\theta_v(\alpha))=\theta_v(\alpha) $.
Hence for an isomorphism $g:M_H(v) \to M_H(v)$ induced by $\Phi$,
we have an isomorphism $g':K_H(v) \to K_H(v)$ 
which induces the isomorphism $\Phi:v^\perp \to v^\perp$. 
\begin{NB}
We take $(x,y) \in X \times \Pic^0(X)$ such that
$\psi \circ g$ preserves a fiber of $a$. 
\end{NB}

\begin{NB}
We have a decomposition $V=\sum_i V_i$ such that
$(f-\alpha_i)^{n_i} V_i=0$ $(\alpha_i \in {\Bbb R})$
or $(f-\alpha_i)^{n_i} (f-\overline{\alpha_i})^{n_i} V_i=0$ 
$(\alpha_i \in {\Bbb C} \setminus {\Bbb R})$.
In particular the eigenvalues are $\alpha_i,\overline{\alpha_i}$.
We assume that $|\alpha_1| \geq |\alpha_2| \geq \cdots \geq |\alpha_k |$.
We take a non-zero vector $w=(w_0,w_1,w_2) \in (\oplus_{i>1} V_i)^\perp$.
For $\xi \in \Amp(X)$ such that
$(w_1,\xi) \ne 0$, 
$e^{n \xi} \not \in w^\perp$ except finitely many $n \in {\Bbb Z}$.
\begin{NB2}
If $e^{n_i \xi} \in w^\perp$ for $i=1,2,3$, then
by using Vandermonde determinant, we get 
$w_0 (\xi^2)=(w_1,\xi)=w_2=0$.
\end{NB2}

Since $\rk H^*(X,{\Bbb Z})_{\alg} \leq 6$ and $\det f=\pm 1$,
if $\alpha_1$ is not real, then $|\alpha_2| \leq 1$.
There are 

Assume that $f(v)=\alpha_i v$. Then 
$\langle f(v)^2 \rangle =\langle v^2 \rangle$ implies $\alpha_i=\pm 1$ or
$\langle v^2 \rangle=0$.

\begin{equation}
\begin{split}
\delta_0'(G,\Phi^n(G'))=& \sum_{-3 \leq i \leq -1}
\sum_{1 \leq j \leq 3}
|\chi(N^{\otimes i},\Phi^n(N^{\otimes j}))|\\
=& \sum_{-3 \leq i \leq -1}
\sum_{1 \leq j \leq 3}
(\alpha^n a_{i,j}+\beta^n b_{i,j})^2 
\end{split}
\end{equation}
Indeed we set $v=(r,\xi,a)$. If $r=0$, then $(\xi^2)=0$.

Assume that $X$ is simple, that is, there is no elliptic curve.
For a semi-homogeneous sheaf $E$, 
$\chi(N,E)=0$ implies $(c_1(E \otimes N^{\vee})^2)=0$.
Hence  $v(E) \in {\Bbb Z}v(N)$.
If $E$ is stable, then $\sum_k \dim \Hom(N,E[k]) \leq 4$.

If $\chi(N^{\otimes i},\Phi^n(N^{\otimes j}))=0$, 
then $\sum_k \dim \Hom(N^{\otimes i},\Phi^n(N^{\otimes j}))$

 \end{NB}

\section{Appendix}

Let $(X,H)$ be a principally polarized 
abelian variety of $\dim X=d$.
In \cite{MP}, cohomological action of the group $G$ of Fourier-Mukai transforms
generated by $\Phi_{X \to X}^{{\bf P}}$ and $\otimes {\cal O}_X(H)$
is described. In particular the action on the cohomology group
generated by $H$ is the action of $\SL(2,{\Bbb Z})$ 
on the $d$-th symmetric product of 
${\Bbb Q}^2$.
Then we can also compute $h_t(\Phi_{X \to X}^{{\bf E}})$ of 
$\Phi_{X \to X}^{{\bf E}} \in G$,
where ${\bf E}$ is a coherent sheaf on $X \times X$. 
In particular 
$h_t(\Phi_{X \to X}^{{\bf E}})=\log |\alpha|^d-dt$ if 
$\tr A<-2$.
\begin{rem}
For a semi-homogeneous sheaf $E$ with $\rk E>0$,
if $c_1(E)$ is ample, then $H^i(E)=0$ for $i \ne 0$
 (cf. \cite[Prop. 7.3]{Mu0}).
\end{rem}

\vspace{1pc}

{\it Acknowledgement.}
I would like to thank Atsushi Takahashi for 
useful comments.

\end{document}